\documentclass[12pt]{amsart}
\usepackage[utf8]{inputenc}
\usepackage{amssymb}
\usepackage{amsmath}
\usepackage{comment}
\usepackage{amsthm}
\usepackage{textcomp}
\usepackage[all]{xy}
\usepackage{tikz-cd}
\usepackage{xcolor}
\definecolor{darkgrey}{rgb}{0.4,0.4,0.5}
\usepackage{hyperref}
\hypersetup{
	colorlinks=true,
	linkcolor=blue,
	citecolor=darkgrey
}

\usepackage[OT2,T1]{fontenc}
\DeclareSymbolFont{cyrletters}{OT2}{wncyr}{m}{n}
\DeclareMathSymbol{\Sha}{\mathalpha}{cyrletters}{"58}

\usepackage{url}

\usepackage{multirow}

\newtheorem{theorem}{Theorem}[section]

\newtheorem{conjecture}[theorem]{Conjecture}
\newtheorem{corollary}[theorem]{Corollary}
\newtheorem{proposition}[theorem]{Proposition}
\theoremstyle{definition}

\newtheorem{proposition-definition}[theorem]{Proposition-Definition}

\newtheorem{examples}[theorem]{Examples}

\theoremstyle{remark}
\newtheorem{remark}[theorem]{Remark}

\numberwithin{equation}{section}

\newcommand{\cyc}{{\mathrm{cyc}}}

\newcommand{\Gal}{{\mathrm{Gal}}}
\newcommand{\Ker}{{\mathrm{Ker}}}
\newcommand{\Coker}{{\mathrm{Coker}}}
\newcommand{\Sel}{{\mathrm{Sel}}}

\newcommand{\PG}{{\mathrm{PG}}}

\newcommand{\rank}{{\mathrm{rank}}}

\newcommand{\PGL}{{\mathrm{PGL}}}

\newcommand{\Q}{{\mathbb Q}}
\newcommand{\Z}{{\mathbb Z}}

\definecolor{Green}{rgb}{0.0, 0.5, 0.0}

\begin{document}
	\title[Iwasawa theory over $PGL(2)$ extension]{Selmer groups of elliptic curves over the $PGL(2)$ extension}

	\author{Jishnu Ray}
	\address{Department of Mathematics, The University of British Columbia}
	\curraddr{Room 121, 1984 Mathematics Road\\
		Vancouver, BC\\
		Canada V6T 1Z2}
	\email{jishnuray1992@gmail.com}
	
	\author{R. Sujatha}
	\address{Department of Mathematics, The University of British Columbia}
	\curraddr{Room 121, 1984 Mathematics Road\\
		Vancouver, BC\\
		Canada V6T 1Z2}
	\email{sujatha@math.ubc.ca}

	\thanks{~}

	\begin{abstract}
		Iwasawa theory of elliptic curves over noncommutative extensions has been a fruitful area of research. The central object of this paper is to use Iwasawa theory over the $GL(2)$ extension to study the dual Selmer group over the $PGL(2)$ extension. 
	\end{abstract}

	\maketitle
	\textit{AMS subject classifications:} 11R23, 11G05, 11R34
	
	\section{Introduction}
	Iwasawa theory of elliptic curves without complex multiplication over the trivializing extension was initiated in \cite{Howson_thesis}, \cite{CoatesHowsonII}, \cite{Coates_Fragments}. One of the important results proved in \cite{Coates_Fragments} is that the Pontryagin dual of the Selmer group is torsion over the corresponding noncommutative Iwasawa algebra under suitable hypothesis. 
	Let $E$ be an elliptic curve over a number field $F$ such that $E$ has good ordinary reduction at all the primes of $F$ lying over an odd prime $p$. A celebrated conjecture, due to Mazur,  then states that the dual Selmer group over the cyclotomic $\Z_p$-extension is torsion as a module over the corresponding Iwasawa algebra. 
	
	In the context of Iwasawa theory over   a noncommutative $p$-adic Lie extension, one 
	mainly considers  admissible $p$-adic Lie extensions which contain the cyclotomic extension.  Results from cyclotomic Iwasawa theory are then used to obtain results in noncommutative Iwasawa theory.
	Coates and the second author formulated the $\mathfrak{M}_H(G)$-conjecture as a natural analogue of the above conjecture of Mazur \cite{CoatesSujatha_MHG}. 
	In this paper, an attempt is made to study  Iwasawa theory  of  elliptic curves over a $PGL(2)$ extension, by adopting a descent approach, since the $PGL(2)$ extension does not contain the cyclotomic extension. Specifically, we use $GL(2)$ Iwasawa theory to try and gain insights into $PGL(2)$ Iwasawa theory.
	Our results, though modest give necessary and sufficient conditions for the dual Selmer group to be torsion over the $PGL(2)$ extension. In addition, we also use the structure theorem of modules over noncommutative Iwasawa algebras (see  \cite{CoatesSchneiderSujatha-modules}), to further understand when the dual Selmer group over the $PGL(2)$ extension is a torsion module. 
	Let $F_\infty=F[E_{p^\infty}]$ be the noncommutative $p$-adic Lie extension with Galois group $G=\Gal(F_\infty/F)$ having center $C$. Serre showed that  $G$ is a compact open subgroup of $GL(2,\Z_p)$. Let $\Sel(E/F_\infty)$ be the Selmer group over $F_\infty$; its  Pontryagin dual  is easily seen to be a finitely generated module over the Iwasawa algebra $\Lambda(G)$. Let $K_\infty$ be the fixed field of $F_\infty$ under the center $C$. Let  $\Sel(E/K_\infty)$ be the Selmer group over $K_\infty$;  its  dual  is  again a finitely generated module over the Iwasawa algebra $\Lambda(\PG)=\Lambda(G/C)$. The main result in the paper is summarized below.

		\begin{theorem}[see Theorem \ref{mainTheoremtorsion}]
			The dual Selmer group $\widehat{\Sel(E/K_\infty)}$ is a torsion  $\Lambda(\PG)$-module if any of the following conditions hold:
			\begin{enumerate}
				\item The  Selmer group $\Sel(E/F)$ is finite and $H_2(\PG, \widehat{\Sel(E/K_\infty)})$ is finite.
				\item The Selmer group $\Sel(E/F)$ is finite, $\widehat{\Sel(E/F_\infty)}$ is a torsion  $\Lambda(G)$-module and $H_0(\PG, H_1(C, \widehat{\Sel(E/F_\infty)})$ is finite.
				\item $\widehat{\Sel(E/F_\infty)}$ is a torsion $\Lambda(G)$-module and $C$ acts as a nonzero divisor on  $\widehat{\Sel(E/F_\infty)}$.
			\end{enumerate}
			Conversely, if  $\widehat{\Sel(E/K_\infty)}$ is a  torsion $\Lambda(\PG)$-module then  part $(3)$ holds.
		\end{theorem}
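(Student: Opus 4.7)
The plan is to carry out a descent from $F_\infty$ down to $K_\infty$. Set $M := \widehat{\Sel(E/F_\infty)}$. Combining the inflation--restriction sequence for $\Gal(F_\infty/K_\infty) = C \cong \Z_p$ applied to the relevant global and local Galois cohomology of $E_{p^\infty}$, and using that $C$ has cohomological dimension one, yields a short (or at most six-term) exact sequence relating $\Sel(E/K_\infty)$ to $\Sel(E/F_\infty)^C$ with error terms built from $H^i(C, E(F_\infty)_{p^\infty})$ and from cohomology of the local Kummer groups. Taking Pontryagin duals converts invariants into coinvariants and produces the main tool: an exact sequence involving $\widehat{\Sel(E/K_\infty)}$ and $M_C$, reducing the forward direction to controlling $M_C$ as a $\Lambda(\PG)$-module, and the converse to inferring properties of $M$ from $M_C$ and $M^C$.

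For part $(3)$, the central observation is that, because $C$ is central in $G$, one has an identification $\Lambda(G) = \Lambda(\PG)[[T]]$ with $T = \gamma - 1$ for a topological generator $\gamma$ of $C$. The non-zero-divisor hypothesis gives a short exact sequence $0 \to M \xrightarrow{T} M \to M_C \to 0$, so $M_C$ is finitely generated over $\Lambda(\PG) = \Lambda(G)/T\Lambda(G)$. If $M$ is moreover $\Lambda(G)$-torsion, pick a non-zero-divisor $f \in \Lambda(G)$ annihilating $M$ and factor $f = T^k g$ with $g \notin T\Lambda(G)$; then $\bar g := g \bmod T$ is a nonzero annihilator of $M_C$ in $\Lambda(\PG)$, so $M_C$ is $\Lambda(\PG)$-torsion. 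The descent exact sequence then yields $(3)$. Parts $(1)$ and $(2)$ follow similar lines, the finiteness hypothesis on $\Sel(E/F)$ together with that on $H_2(\PG, \widehat{\Sel(E/K_\infty)})$ or on $H_0(\PG, H_1(C, M))$ supplying control of the error terms via the Hochschild--Serre spectral sequence $H_i(\PG, H_j(C, M)) \Rightarrow H_{i+j}(G, M)$ and an Euler characteristic computation in the spirit of \cite{CoatesSujatha_MHG}.

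For the converse, the assumed $\Lambda(\PG)$-torsion-ness of $\widehat{\Sel(E/K_\infty)}$ transfers through the dualized descent sequence to $\Lambda(\PG)$-torsion-ness of $M_C$ up to bounded error, and a topological Nakayama argument for $\Lambda(G) = \Lambda(\PG)[[T]]$ promotes this to $\Lambda(G)$-torsion-ness of $M$. The more delicate point---and the step I expect to present the main obstacle---is forcing the vanishing $M^C = 0$ that is equivalent to $C$ acting as a non-zero-divisor. One route is to locate $M^C$ inside a specific subquotient of $\widehat{\Sel(E/K_\infty)}$ via the dualized descent sequence and then argue that a nontrivial $M^C$ would violate the torsion hypothesis; a potentially cleaner route is to invoke the structure theorem for $\Lambda(G)$-modules of \cite{CoatesSchneiderSujatha-modules} and rule out a nonzero $M^C$ at the level of pseudo-null quotients.
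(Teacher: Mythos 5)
Your proposal has the right overall shape---descend from $F_\infty$ to $K_\infty = F_\infty^C$, exploit the central element $T=\gamma-1$ of $\Lambda(G)$, and use rank bookkeeping---but several steps as written are either unjustified or not the mechanism actually needed.

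First, the descent identification is exact, not ``up to error terms.'' The paper's Theorem \ref{thm:main} shows the restriction maps $\alpha,\beta,\gamma$ in the fundamental diagram \eqref{fundamental_diagram} are all isomorphisms, so $\Sel(E/K_\infty)\cong\Sel(E/F_\infty)^C$ and hence $\widehat{\Sel(E/K_\infty)}\cong M_C$ on the nose. The key point you gloss over is that $H^1(C,E_{p^\infty})=0$ and the corresponding local vanishing; your ``error terms built from $H^i(C,E(F_\infty)_{p^\infty})$'' all die, and establishing this requires the argument of Theorem \ref{thm:main}, not merely citing Hochschild--Serre.

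Second, your identification $\Lambda(G)=\Lambda(\PG)[[T]]$ presupposes a splitting $G\cong\PG\times C$, which is \emph{not} among the hypotheses of Theorem \ref{mainTheoremtorsion} (it only enters later, in Proposition \ref{prop:centerNonTrivial} and Section \ref{Sec:applicationsall}). What is true in general is only that $T=\gamma-1$ is a central nonzero-divisor with $\Lambda(G)/T\Lambda(G)\cong\Lambda(\PG)$. Relatedly, your argument for (3) begins ``pick a non-zero-divisor $f\in\Lambda(G)$ annihilating $M$'': for a finitely generated torsion module over a noncommutative Noetherian domain, a nonzero two-sided annihilator need not exist (the two-sided annihilator of a cyclic torsion module $\Lambda(G)/J$ with $J$ a left ideal can be zero), so this step is a genuine gap. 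The paper sidesteps it entirely via the Howson-type rank formula
\[
\rank_{\Lambda(G)}M=\rank_{\Lambda(\PG)}M_C-\rank_{\Lambda(\PG)}H_1(C,M),
\]
which gives (3) immediately once one observes $H_1(C,M)=M[T]=0$ under the nonzero-divisor hypothesis.

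Third, for the converse, ``topological Nakayama'' is not the right tool: what one needs is again the rank formula above, which forces $\rank_{\Lambda(G)}M=0$ \emph{and} $\rank_{\Lambda(\PG)}H_1(C,M)=0$ from $\rank_{\Lambda(\PG)}M_C=0$. Being $\Lambda(\PG)$-torsion, $H_1(C,M)$ is pseudonull as a $\Lambda(G)$-module (one dimension drop from $C$, one from torsion), and since $H_1(C,M)=M[T]$ is a $\Lambda(G)$-submodule of $M$, the conclusion $H_1(C,M)=0$ requires the theorem of Ochi--Venjakob that $\widehat{\Sel(E/F_\infty)}$ has no nonzero pseudonull submodule. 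You gesture towards the structure theorem and ``pseudo-null quotients,'' but the needed statement is about pseudonull \emph{submodules}, and you should cite it explicitly; without it the converse does not close.

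Finally, parts (1) and (2) need more than ``an Euler characteristic computation'': you need the Coates--Howson finiteness of $H^0,H^1(G,\Sel(E/F_\infty))$ when $\Sel(E/F)$ is finite (Theorem \ref{thm:CoatesHowson}), the resulting finiteness of $H_0,H_1(\PG,M_C)$ (Corollary \ref{cor:descend}), and then the alternating-sum rank formula over $\Lambda(\PG)$ forcing $\rank_{\Lambda(\PG)}M_C=0$. Part (2) then reduces to (1) by the five-term exact sequence of the homological Hochschild--Serre spectral sequence together with $H_2(G,M)=0$ and $H_1(G,M)$ finite.
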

	
	The assertion in (3)  can also be  reinterpreted using the structure theorem for modules over noncommutative Iwasawa algebras. By the structure theorem    for finitely generated modules over noncommutative Iwasawa algebras  (see  \cite[p. 74]{CoatesSchneiderSujatha-modules}), the dual of $\Sel(E/F_\infty)$  is pseudo-isomorphic to a finite direct sum of cyclic modules of the form $\Lambda(G)/J_i$ where $J_i$ are reflexive  ideals in $\Lambda(G)$. If the center acts as a nonzero divisor on each of these cyclic summands, then it can be shown that the dual Selmer group $\Sel(E/K_\infty)$  is torsion as a $\Lambda(\PG)$-module (see Theorem \ref{thm:simplifiedMainTheoremTorsion}).   It seems to us that  the full strength of the structure theorem has not been exploited. Specifically, the ideals in the  summands are reflexive and pure of grade $1$. It might be possible to use this effectively to study the homology groups  $H_j(\PG, \Lambda(G)/J_i).$ This would then yield finer results on the structure of the dual Selmer group. We hope to return to this line of investigation  later.

	This paper consists of four sections including the introduction. In Section \ref{sec:prel},  we set up notation and collect the preliminary results that will be needed.
	In Section \ref{sec:3fundamental}, the defining exact sequences for the Selmer groups are used to compare the Selmer groups over the $GL(2)$ extension and the $PGL(2)$ extension (see Theorem \ref{thm:main}). Furthermore,  using a descent approach from the Selmer group over the $GL(2)$ extension, 
	we prove various equivalent conditions that are sufficient to ensure that the dual Selmer over the $PGL(2)$ extension is a cotorsion module over the corresponding Iwasawa algebra (see Theorems \ref{mainTheoremtorsion} and \ref{thm:simplifiedMainTheoremTorsion}). 
	As an application, Section \ref{Sec:applicationsall} deals with regular growth of ranks of Selmer groups as we descend from the $GL(2)$ extension to the $PGL(2)$ extension (see Proposition \ref{prop:reg_growth}).  Moreover,  we study the relation between the Selmer group over the base field $F$ and the Selmer group over the $PGL(2)$ extension (see Theorem \ref{thm:main2}).
	
	Zerbes \cite[Chapter 8]{Zerbes_thesis} and Howson \cite[Thm. 5.34]{Howson_thesis},  in their respective theses  studied the Euler characteristic of the Selmer group over the $PGL(2)$ extension, assuming it is a cotorsion module. Our attempt in this paper has largely been towards understanding when the dual Selmer over the $PGL(2)$ extension is a torsion module.
	
	It is clear that our methods in this paper are extendable to a broader class of Galois representations.  Our case of representations arising from elliptic curves should be seen as a first step.
	\section*{Acknowledgement}
	This work is supported by PIMS-CNRS  postdoctoral research funding received by the first author of this article.
	The authors would like to thank Srikanth B. Iyengar and Antonio Lei for helpful discussions. The first author would also like to thank the organizers of "Iwasawa 2019" conference (June 2019, Bordeaux) for invitation to speak. Theorem \ref{thm:main} in this paper was announced in that conference. The first author is  very grateful to Sarah Zerbes for sending him a copy of her PhD thesis on Selmer groups over $p$-adic Lie extensions.

	\section{Preliminaries}\label{sec:prel}
	
	Let $E$ be an elliptic curve over a number field $F$ without complex multiplication and $p$ be an odd prime. Let us suppose that $E$ has good ordinary reduction at  the primes above $p$. 
	
	For any $p$-adic Lie extension $H_\infty$ of $F$ we can define the Selmer group of $E$ over $H_\infty$ as a kernel of a natural global to local cohomological map defined by the following sequence.
	\begin{equation}\label{eq:def-Selmer}
	0 \rightarrow \Sel(E/H_\infty) \rightarrow H^1(F_S/H_\infty, E_{p^\infty}) \xrightarrow{\lambda_{H_\infty}} \oplus_{v \in S}J_v(H_\infty).
	\end{equation}
	Here $J_v(H_\infty)$'s are local cohomology groups defined in \cite[Eqn. 5-15]{Howson_thesis}.
	
	The Selmer group encodes several $p$-adic arithmetic information about the elliptic curve. It follows immediately from Kummer theory of $E$ over $H_\infty$ that we have the following exact sequence
	$$0 \rightarrow E(H_\infty) \otimes \Q_p/\Z_p \rightarrow \Sel(E/H_\infty) \rightarrow \Sha(E/H_\infty)(p) \rightarrow 0$$
	where $\Sha(E/H_\infty)$ is the  Tate-Shafarevich group (cf. \cite{CoatesSujatha_book}).

	It is easy to see that the dual Selmer group is a finitely generated module over the Iwasawa algebra 
	$$\Lambda(\Omega)= \varprojlim_W\Z_p[\Omega/W],$$ where $W$ runs over all open normal subgroups of $\Omega=\Gal(H_\infty/F)$. It is natural to study the structure of this Selmer group. In the classical case, when the group $\Omega=\Gal(F_\cyc/F)\cong \Z_p$ is commutative,  the Iwasawa algebra $\Lambda(\Omega)$ is a commutative local ring. One may then use the well known structure theorem of finitely generated modules in this setting \cite[Chap. VII, Sec. 4]{Bourbaki_commutative_algebra}. In the noncommutative case, the Iwasawa algebra $\Lambda(\Omega)$ is an Auslander regular local ring \cite[Thm. 3.26]{Venjakob1} and hence there is a dimension theory for modules in this setting. Suppose $M$ is a finitely generated module over a noncommutative Iwasawa algebra $\Lambda(\Omega)$. Then $M$ is a torsion $\Lambda(\Omega)$-module if  $\dim M \leq \dim \Lambda(\Omega) - 1 $. The module $M$ is pseudonull if $\dim M \leq \dim \Lambda(\Omega) - 2$ (see \cite[Sec. 3]{Venjakob1}). There is also a weak structure theorem \cite{CoatesSchneiderSujatha-modules}.

	Set $$F_n=F(E_{p^{n+1}}), \hspace{.5in} F_\infty = F(E_{p^{\infty}})$$
	and  write 
	$$G_n = \Gal(F_{\infty}/F_n), \hspace{.5in} G=\Gal(F_\infty/F).$$
	By a well known result of Serre \cite{Serre1}, $G$ is open in $GL_2(\Z_p)$ for all primes and $G=GL_2(\Z_p)$ for all but a finite number of primes. 
	Note that $F_\infty$ contains $F_\cyc$, the cyclotomic $\Z_p$-extension over $F$ with Galois group $\Gamma$, a $p$-adic Lie group of dimension $1$.

	A deep conjecture of \cite{Mazur} asserts
	\begin{conjecture}[Mazur]
		$\Sel(E/F_\cyc)$ is a  finitely generated  cotorsion  $\Lambda(\Gamma)$-module. 
	\end{conjecture}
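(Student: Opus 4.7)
Mazur's cyclotomic torsion conjecture is a deep open problem in general; the most I can honestly propose is to sketch the one strategy known to succeed---Kato's Euler system, which settles the conjecture for $F=\Q$---and to point out exactly where the argument breaks for an arbitrary base field $F$. The plan is to produce a nonzero element of the characteristic ideal of $\widehat{\Sel(E/F_\cyc)}$: since $\Lambda(\Gamma)\cong\Z_p[[T]]$ is a regular commutative local ring, ``$\Lambda(\Gamma)$-torsion'' for a finitely generated module is equivalent to having nonzero characteristic ideal, so a single nonzero annihilator (up to pseudo-null) suffices.

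The main route has three stages. First, construct the Euler system: for $F=\Q$, start from Beilinson elements in $K_2$ of the modular curves $Y(N)$ (built out of Siegel units) and apply the $p$-adic étale regulator/Chern class map to obtain a compatible family of classes $z_N\in H^1(\Q(\mu_N),T_pE)$ satisfying the Euler system norm relations. If $E$ has CM by an order of an imaginary quadratic field $K$, one substitutes Rubin's elliptic-unit Euler system over $K$ instead. Second, feed these classes into the Kolyvagin / Rubin / Perrin-Riou derivative machinery to bound the size of $\Sel(E/F_n)$ for $F_n\subset\Q_\cyc$ in terms of the leading classes $z_{p^n}$. Third, use Kato's explicit reciprocity law to identify the image of $(z_{p^n})$ under the Perrin-Riou big logarithm with the ordinary $p$-adic $L$-function $L_p(E,s)$; Rohrlich's non-vanishing theorem for twists of Hasse--Weil $L$-values guarantees $L_p(E,s)\not\equiv 0$ as an element of $\Lambda(\Gamma)$, so the Euler system produces a nonzero element of the characteristic ideal and the torsion statement follows over $\Q_\cyc$.

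To reach a general number field $F$ one attempts to descend: when $F/\Q$ is a solvable abelian extension linearly disjoint from $\Q_\cyc$, twisting Kato's classes by finite-order characters of $\Gal(F/\Q)$ and reassembling yields the conclusion over $F_\cyc$; some non-abelian cases can be reached by Brauer induction plus the modularity theorem. The main obstacle is precisely the Euler-system step in the genuinely general setting: outside modular curves over $\Q$, CM contexts, and controlled base changes from these, no Euler system of sufficient flexibility is known over an arbitrary number field $F$, and no alternative purely analytic construction of a provably nonzero $p$-adic $L$-function is available either. This is exactly why Mazur's conjecture remains open and why the present paper must treat it throughout as a working hypothesis rather than as an available theorem.
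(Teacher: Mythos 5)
You have correctly recognized that this statement is an open conjecture rather than a theorem: the paper likewise offers no proof, only remarking that the conjecture is known in some cases (notably $F=\Q$) by Kato's theorem. Your sketch of the Euler system strategy and of where it breaks down over a general $F$ is accurate and consistent with the paper's treatment of the conjecture as a working hypothesis.
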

	This conjecture is known to hold in
	some cases (for instance when $F=\Q$), thanks to a celebrated result of Kato \cite{Kato}.
	
	Coates and Howson in \cite{CoatesHowsonI}, \cite{CoatesHowsonII}, developed Iwasawa theory over the $GL(2)$ extension $F_\infty$ and proved conditions under which the Selmer group $\Sel(E/F_{\infty})$ is cotorsion as a module over the noncommutative Iwasawa algebra $\Lambda(G).$ In particular, they showed the following results (cf. Lemmas 4.7, 4.8, Prop. 4.3, Thm. 4.5 of \cite{Coates_Fragments}).

	\begin{theorem}[Coates, Howson]\label{thm:CoatesHowson}
		If $\Sel(E/F)$ is finite, then 
		$H^i(G,\Sel(E/F_\infty))$ is finite for $i=0,1$. Additionally, if $\Sel(E/F_\infty)$ is $\Lambda(G)$-cotorsion, then $H^i(G,\Sel(E/F_\infty))$ is  $0$ for $i =2,3,4$.
	\end{theorem}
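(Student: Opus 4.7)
The plan is to proceed by a descent argument from $F_\infty$ down to $K_\infty = F_\infty^C$, using the fundamental comparison diagram (Theorem \ref{thm:main}) together with the Hochschild--Serre spectral sequence for the group extension $1 \to C \to G \to \PG \to 1$. Since $G$ is open in $GL_2(\Z_p)$, its center $C$ is isomorphic to (an open subgroup of) $\Z_p$; pick a topological generator $\gamma$, so that $\gamma - 1$ is a central element of $\Lambda(G)$ with $\Lambda(G)/(\gamma-1)\Lambda(G) = \Lambda(\PG)$. Dualizing the exact sequence \eqref{eq:def-Selmer}, the fundamental diagram from Section \ref{sec:3fundamental} identifies $\widehat{\Sel(E/K_\infty)}$ with $H_0(C,\widehat{\Sel(E/F_\infty)})$ up to modules controlled by finite groups and local cohomology of $C$. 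Hence showing $\Lambda(\PG)$-torsionness amounts to controlling the coinvariants $\widehat{\Sel(E/F_\infty)}_C = \widehat{\Sel(E/F_\infty)}/(\gamma-1)\widehat{\Sel(E/F_\infty)}$.

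For condition (3) I would argue by dimension theory over the Auslander regular local ring $\Lambda(G)$, using $\dim\Lambda(G) = \dim\Lambda(\PG)+1$. The hypothesis that $C$ acts as a nonzero divisor means $\widehat{\Sel(E/F_\infty)}^C = H_1(C,\widehat{\Sel(E/F_\infty)}) = 0$, so the spectral sequence
\begin{equation*}
E^2_{i,j} = H_i\bigl(\PG,\,H_j(C,\widehat{\Sel(E/F_\infty)})\bigr) \Rightarrow H_{i+j}\bigl(G,\widehat{\Sel(E/F_\infty)}\bigr)
\end{equation*}
degenerates onto the row $j=0$. Multiplication by the nonzero divisor $\gamma-1$ then drops the canonical dimension of $\widehat{\Sel(E/F_\infty)}$ by one; combined with the $\Lambda(G)$-torsion hypothesis $\dim \widehat{\Sel(E/F_\infty)} \le \dim\Lambda(G)-1$, this gives $\dim H_0(C,\widehat{\Sel(E/F_\infty)}) \le \dim\Lambda(\PG)-1$, i.e. $\Lambda(\PG)$-torsion. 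Transferring back through the fundamental diagram then gives the result.

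For conditions (1) and (2), the strategy is to collapse the same spectral sequence by different means and to leverage Theorem \ref{thm:CoatesHowson}. Under (1), finiteness of $\Sel(E/F)$ gives finite $H^i(G,\Sel(E/F_\infty))$ for $i=0,1$, which combined with finiteness of $H_2(\PG,\widehat{\Sel(E/K_\infty)})$ controls the low-degree terms of the spectral sequence computing $H_i(\PG,\widehat{\Sel(E/K_\infty)})$; showing these are all finite is enough to force $\Lambda(\PG)$-torsionness by a standard cohomological dimension criterion. Under (2), the extra hypothesis that $H_0(\PG,H_1(C,\widehat{\Sel(E/F_\infty)}))$ is finite bounds the only obstructing edge term between $j=1$ and $j=0$ rows, so $\widehat{\Sel(E/F_\infty)}_C$ inherits $\Lambda(\PG)$-torsionness from the $\Lambda(G)$-torsion of $\widehat{\Sel(E/F_\infty)}$; the finiteness of $\Sel(E/F)$ is again used to handle the error terms in the fundamental diagram.

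For the converse, assume $\widehat{\Sel(E/K_\infty)}$ is $\Lambda(\PG)$-torsion. The fundamental diagram shows $\widehat{\Sel(E/F_\infty)}_C$ is $\Lambda(\PG)$-torsion up to finite error, and by Nakayama-type lifting over the central regular element $\gamma-1$ one concludes $\widehat{\Sel(E/F_\infty)}$ is $\Lambda(G)$-torsion. To get the nonzero divisor assertion, observe that $H_1(C,\widehat{\Sel(E/F_\infty)}) = \widehat{\Sel(E/F_\infty)}^C$ is itself a $\Lambda(G)$-module that survives to $E^\infty$ in a low-degree piece of the spectral sequence; if it were nonzero, one could locate a nontrivial contribution to $H_i(\PG,\widehat{\Sel(E/K_\infty)})$ of full canonical dimension $\dim\Lambda(\PG)$, contradicting torsionness. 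The main obstacle I anticipate is exactly this last step: carefully verifying that no finite-error terms in the fundamental diagram can absorb a non-torsion $\widehat{\Sel(E/F_\infty)}^C$, which will require a delicate tracking of reflexive submodules and use of the Auslander regularity of $\Lambda(G)$.
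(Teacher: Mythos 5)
Your proposal does not address the statement you were asked to prove. The statement is Theorem \ref{thm:CoatesHowson}, the Coates--Howson result that finiteness of $\Sel(E/F)$ forces $H^i(G,\Sel(E/F_\infty))$ to be finite for $i=0,1$, and that $\Lambda(G)$-cotorsionness of $\Sel(E/F_\infty)$ forces $H^i(G,\Sel(E/F_\infty))=0$ for $i=2,3,4$. This is a statement entirely about the $GL(2)$ extension $F_\infty/F$ and the cohomology of the four-dimensional $p$-adic Lie group $G$. What you have written instead is a proof sketch of Theorem \ref{mainTheoremtorsion}, the paper's main result on when $\widehat{\Sel(E/K_\infty)}$ is $\Lambda(\PG)$-torsion: your entire argument is organized around the descent from $F_\infty$ to $K_\infty=F_\infty^C$, the Hochschild--Serre spectral sequence for $1\to C\to G\to \PG\to 1$, and the three conditions (1), (2), (3) of that theorem. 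None of this machinery is relevant to the target statement, which makes no reference to $C$, $K_\infty$, or $\Lambda(\PG)$.

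The gap is made unrepairable by the fact that your argument explicitly invokes Theorem \ref{thm:CoatesHowson} as an input (``the strategy is to \ldots\ leverage Theorem \ref{thm:CoatesHowson}''), so even a charitable reading cannot extract from your text a non-circular proof of the statement in question. For the record, the paper does not prove this theorem either: it quotes it from the literature (Lemmas 4.7, 4.8, Prop.\ 4.3 and Thm.\ 4.5 of the Coates reference), where the finiteness for $i=0,1$ comes from a control-theorem analysis of the fundamental diagram comparing $\Sel(E/F)$ with $\Sel(E/F_\infty)^G$ (bounding the kernels and cokernels of the global and local restriction maps over $F_\infty/F$), and the vanishing in degrees $2,3,4$ uses the homological characterization of torsion modules over the Auslander regular ring $\Lambda(G)$ together with the strictness of the defining sequence of the Selmer group. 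If you want to supply a proof, that is the route to take; the $PGL(2)$ descent apparatus should not appear anywhere in it.
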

	
	\begin{theorem}[Coates, Howson]\label{Coates:examples}
		If $G$ is a pro-$p$ group and $\Sel(E/F_{\cyc})$ is a cotorsion $\Lambda(\Gamma)$-module and has $\mu$-invariant $0$, then $\Sel(E/F_{\infty})$ is a cotorsion $\Lambda(G)$-module. 
	\end{theorem}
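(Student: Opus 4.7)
Set $H := \Gal(F_\infty/F_\cyc)$, a closed normal subgroup of $G$ with $G/H \cong \Gamma$. Since $G$ is pro-$p$ and open in $GL_2(\Z_p)$, the subgroup $H$ is a pro-$p$ open subgroup of $SL_2(\Z_p)$ of dimension $3$. Put $M := \widehat{\Sel(E/F_\infty)}$. The plan is to show that $M$ is finitely generated as a $\Lambda(H)$-module; because $\dim H < \dim G$, any finitely generated $\Lambda(H)$-module is automatically $\Lambda(G)$-torsion by the standard dimension argument for Auslander regular Iwasawa algebras. This reduces the theorem to a descent from the trivializing $GL(2)$-extension down to the cyclotomic line.

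The descent is carried out via a control diagram obtained by applying Hochschild--Serre (with respect to the pro-$p$ extension $H$) to the defining sequence (\ref{eq:def-Selmer}) over $F_\infty$ and comparing with the analogous sequence over $F_\cyc$. A diagram chase produces a four-term exact sequence
\[
0 \longrightarrow \mathcal{K} \longrightarrow \Sel(E/F_\cyc) \longrightarrow \Sel(E/F_\infty)^H \longrightarrow \mathcal{C} \longrightarrow 0,
\]
in which $\mathcal{K}$ and $\mathcal{C}$ are controlled by cohomology groups $H^i(H, E_{p^\infty})$ and by the $H$-cohomology of the local terms $J_v(F_\infty)$ attached to primes $v \in S$. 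Under good ordinary reduction at primes above $p$, one uses the ordinary filtration of $E_{p^\infty}$ as a decomposition-group module, together with the standard finiteness results for local Galois cohomology over $p$-adic Lie extensions, to conclude that $\mathcal{K}$ and $\mathcal{C}$ are cofinitely generated over $\Z_p$.

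The hypothesis that $\Sel(E/F_\cyc)$ is $\Lambda(\Gamma)$-cotorsion with $\mu$-invariant zero is equivalent to saying that $\widehat{\Sel(E/F_\cyc)}$ is a finitely generated $\Z_p$-module. Dualising the control sequence, this forces the $H$-coinvariants $M_H \cong \widehat{\Sel(E/F_\infty)^H}$ to be finitely generated over $\Z_p$. Since $H$ is pro-$p$, the Jacobson radical of $\Lambda(H)$ is $\mathfrak{m}_H = (p, I_H)$, and one computes $M / \mathfrak{m}_H M \cong M_H / p M_H$, a finite $\Z/p\Z$-vector space. Topological Nakayama for compact $\Lambda(H)$-modules now yields that $M$ is finitely generated over $\Lambda(H)$, which by the opening reduction completes the proof.

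The main obstacle is the local analysis in the control diagram, particularly bounding $H^1(H_w, E_{p^\infty})$ at primes $w \mid p$: this is where the good ordinary hypothesis enters essentially, via the ordinary filtration of the local representation, allowing one to reduce the local cohomology to that of abelian characters on subgroups of $H$. Once the local input is in place, the remaining steps --- propagating finiteness from $\widehat{\Sel(E/F_\cyc)}$ through the control diagram to $M_H$, and then invoking topological Nakayama --- are formal.
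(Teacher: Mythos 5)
The paper does not prove this statement; it is quoted from Coates--Howson, and your proposal correctly reconstructs exactly their argument: reduce to showing $\widehat{\Sel(E/F_\infty)}$ is finitely generated over $\Lambda(H)$ with $H=\Gal(F_\infty/F_\cyc)$, use the control diagram between $F_\cyc$ and $F_\infty$ plus the equivalence ($\Lambda(\Gamma)$-cotorsion with $\mu=0$) $\Leftrightarrow$ (finitely generated over $\Z_p$) to see that $M_H$ is finitely generated over $\Z_p$, apply topological Nakayama over the local ring $\Lambda(H)$, and conclude torsionness over $\Lambda(G)$ from the infinite index of $H$ in $G$. The only part left as a sketch is the local/global finiteness input for the control diagram (finiteness of $H^i(H,E_{p^\infty})$ via Serre's non-CM results and the Coates--Greenberg analysis at $v\mid p$), which you correctly identify as the substantive step; as an outline the proof is sound and matches the cited source.
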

	
	Coates and Howson have also calculated an explicit formula for the Euler characteristic $\chi(G,\Sel(E/F_\infty)$ (cf. \cite[Thm. 1.1]{CoatesHowsonII}) under the assumption that $\Sel(E/F_\infty)$ is cotorsion as a $\Lambda(G)$-module. Their point of view was to understand how Iwasawa theory over the cyclotomic extension $F_\cyc$ influences the Iwasawa theory when one climbs up the tower to $F_\infty$.

	Let $M$ be a module with a continuous action of a  profinite group $G$ with a closed normal subgroup $H$. Recall the Hochschild-Serre spectral sequence,
	\begin{equation}\label{eq:hochserre}
	H^r(G/H,H^s(H,M)) \implies H^{r+s}(G,M).
	\end{equation}
	We shall make repeated use of this sequence with $H=C$, the centre of $G$.

	\section{Descent from $GL(2)$ Iwasawa theory to $PGL(2)$ Iwasawa theory}\label{sec:3fundamental}
	
	From the  Hochschild-Serre spectral sequence (see \eqref{eq:hochserre}) with $G=\Gal(F_\infty/F)$, $H=C$ and $G/H=\PG$, it is easy to see that we obtain the following commutative diagram with exact rows noting that the 
	the center $C$ is   pro-$p$ and has $p$-cohomological dimension 1.
	

	\begin{equation}\label{fundamental_diagram}
	\begin{tikzcd}
	0 \arrow[r] & \Sel(E/F_\infty)^C   \arrow[r] & H^1(F_S/F_\infty, E_{p^\infty})^C  \arrow[r] & \big(\oplus_{v \in S}J_v(F_\infty)\big)^C   \\
	0 \arrow[r] & \Sel(E/K_\infty) \arrow[u, "\alpha"] \arrow[r] &  H^1(F_S/K_\infty, E_{p^\infty}) \arrow[u, "\beta"]  \arrow[r, "\lambda_{K_\infty}"] & \oplus_{v \in S}J_v(K_\infty) \arrow[u, "\gamma"] 
	\end{tikzcd}
	\end{equation}
	
	The vertical maps are given by restriction maps. Our first objective is to prove the following theorem. 
	
	\begin{theorem}\label{thm:main}
		The vertical maps $\alpha, \beta, \gamma$ in the fundamental diagram \eqref{fundamental_diagram} are all isomorphisms. In particular, $$\Sel(E/K_\infty) \cong \Sel(E/F_\infty)^C$$ as $\Lambda(\PG)$-modules.
	\end{theorem}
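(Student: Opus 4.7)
The plan is to read each vertical arrow of \eqref{fundamental_diagram} off the five-term exact sequence of the Hochschild--Serre spectral sequence \eqref{eq:hochserre} applied, globally and then place by place, to the quotient $C$ of the relevant absolute Galois groups; once $\beta$ and $\gamma$ are known to be isomorphisms, $\alpha$ will follow by a diagram chase.

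\emph{The map $\beta$.} Writing $G_{K_\infty} = \Gal(F_S/K_\infty)$ and $G_{F_\infty} = \Gal(F_S/F_\infty)$, the short exact sequence $1 \to G_{F_\infty} \to G_{K_\infty} \to C \to 1$ gives, for $M = E_{p^\infty}$, the five-term sequence
\[
0 \to H^1(C, E_{p^\infty}^{G_{F_\infty}}) \to H^1(G_{K_\infty}, E_{p^\infty}) \xrightarrow{\beta} H^1(G_{F_\infty}, E_{p^\infty})^C \to H^2(C, E_{p^\infty}^{G_{F_\infty}}).
\]
Since $F_\infty = F(E_{p^\infty})$, the fixed module is all of $E_{p^\infty}$, and the rightmost $H^2$ vanishes by the recalled fact that $\mathrm{cd}_p(C) = 1$. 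For the leftmost $H^1$, the key observation is that $C$, lying in the center of $G \subset GL(2,\Z_p)$, acts on $E_{p^\infty} \cong (\Q_p/\Z_p)^2$ by scalars, and being pro-$p$ with $p$ odd embeds into $1 + p\Z_p$. Either $C$ is trivial (in which case $F_\infty = K_\infty$ and there is nothing to prove) or a topological generator acts as multiplication by some $\lambda \in 1+p\Z_p \setminus \{1\}$. Writing $\lambda - 1 = u p^v$ with $u \in \Z_p^\times$ and $v \geq 1$, the surjectivity of multiplication by $p^v$ on each factor $\Q_p/\Z_p$ forces $(\lambda-1) E_{p^\infty} = E_{p^\infty}$, and hence $H^1(C,E_{p^\infty}) = 0$. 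Thus $\beta$ is an isomorphism.

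\emph{The map $\gamma$.} I would repeat the same computation locally. Fix a place $v$ of $F$ in $S$ and decompose $J_v(F_\infty)$ into $C$-orbits on the places of $F_\infty$ above $v$: each orbit sits above a unique place $w$ of $K_\infty$ and has stabilizer a decomposition subgroup $C_{w'} \subseteq C$ at some chosen $w' \mid w$. Shapiro's lemma identifies the $C$-invariants of the corresponding induced module with the $C_{w'}$-invariants of the local $H^1$ at $w'$. Since $C_{w'}$ is closed in $C$, it is again pro-$p$ of $p$-cohomological dimension at most $1$, and acts on $E_{p^\infty}$ by scalars in $1 + p\Z_p$; the exact same vanishing as in the global step therefore yields that the local restriction $H^1(K_{\infty,w}, E_{p^\infty}) \to H^1(F_{\infty,w'}, E_{p^\infty})^{C_{w'}}$ is an isomorphism. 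Compatibility of Kummer with restriction and summing over $w \mid v$ and $v \in S$ then gives that $\gamma$ is an isomorphism.

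\emph{The map $\alpha$.} Finally, the snake lemma (or a direct three-term diagram chase) applied to the two exact rows of \eqref{fundamental_diagram} forces $\alpha$ to be an isomorphism once $\beta$ is an isomorphism and $\gamma$ is injective. The $\Lambda(\PG)$-module identification is then automatic, since all restriction maps are equivariant for the residual $\PG = G/C$-action on both sides. The main obstacle is the local step: one must match the precise definition of $J_v(H_\infty)$ from \cite[Eqn.~5-15]{Howson_thesis} with the inflation--restriction framework uniformly across primes of good and bad reduction and across primes above and below $p$, so that the Kummer quotient is respected. The scalar-action vanishing of $H^1(C_{w'}, E_{p^\infty})$ is the same in every case, but pinning down exactly the module to which inflation--restriction is applied requires some bookkeeping.
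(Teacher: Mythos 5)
Your overall architecture matches the paper's: use $\mathrm{cd}_p(C)=1$ and vanishing of $H^1(C,E_{p^\infty})$ for $\beta$, Shapiro's lemma and local vanishing for $\gamma$, then the snake lemma for $\alpha$. Your treatment of $\beta$ is in fact a cleaner variant of the paper's: you use divisibility of $E_{p^\infty}$ together with the procyclic formula $H^1(C,M)=M/(\sigma-1)M$ directly, whereas the paper first shows $p^nH^1(C,E_{p^\infty})=0$ and then feeds $0 \to E_{p^n}\to E_{p^\infty}\to E_{p^\infty}\to 0$ through the long exact sequence to embed $H^1(C,E_{p^\infty})$ into $H^2(C,E_{p^n})=0$. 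One small caveat: you (and the paper's preamble) assert $C$ is pro-$p$, but when $G=GL_2(\Z_p)$ the center is $\Z_p^\times$, which has a $\mu_{p-1}$ factor; the paper quietly patches this in the $\gamma$ step by splitting $\Theta_w^C=\Delta\times H$ with $\Delta$ of prime-to-$p$ order and $H$ procyclic pro-$p$, and your $\beta$ argument would need the same inflation--restriction patch, though that is routine.

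The genuine gap is in $\gamma$ at $v\mid p$. You correctly observe that restriction $H^1(K_{\infty,w},E_{p^\infty})\to H^1(F_{\infty,w'},E_{p^\infty})^{C_{w'}}$ is an isomorphism, and you flag the passage from this $E_{p^\infty}$-statement to the actual $J_v$-statement (the Kummer quotient $H^1(\cdot,E)(p)$) as an ``obstacle'' you would handle by ``bookkeeping.'' But at $v\mid p$ this is not bookkeeping; it is the crux of the local computation and needs a real theorem. By Shapiro's lemma, $\Ker(\gamma_v)=H^1(\Theta_w^C,E(F_{\infty,w}))(p)$, and for $v\mid p$ this is not $H^1(\Theta_w^C,E_{p^\infty})$: the Kummer image is large, and its behaviour under restriction is not controlled by the $E_{p^\infty}$-vanishing alone. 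The paper's proof invokes the deeply ramified extension theory of Coates and Greenberg \cite{CoatesGreenberg}: because the inertia $I_u$ in $K_\infty/F$ is infinite, $K_{\infty,u}$ is deeply ramified, and their Prop.~4.8 and Thm.~2.13 give $H^1(\Theta_w^C,E(F_{\infty,w}))(p)\cong H^1(\Theta_w^C,D)$ with $D$ the reduction module; then the exact sequence $0\to C'\to E_{p^\infty}\to D\to 0$, together with $H^1(\Theta_w^C,E_{p^\infty})=0$ and $\mathrm{cd}_p(\Theta_w^C)\le 1$, forces $H^1(\Theta_w^C,D)=0$. Without this input, ``compatibility of Kummer with restriction'' does not close the argument at $p$; you would need to supply it. The $v\nmid p$ case, where the local Kummer image is trivial, is as easy as you expect.
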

	\begin{proof}
		Since the $p$-cohomological dimension of $C$ is $1$, by the Hochschild-Serre spectral sequence, it is clear that the cokernel of the maps $\beta$ and $\gamma$ are zero. The following argument shows that  $\ker (\beta)= H^1(C, E_{p^\infty})$ is zero.  The congruence subgroups of $GL_2(\Z_p)$ form a base of neighbourhood for its topology, thus $C$ must contain a scalar matrix
		\[
		x=
		\left( {\begin{array}{cc}
			1+p^n & 0 \\
			0 & 1+p^n \\
			\end{array} } \right)
		\]
		for $n$ sufficiently large. But $x$ lies in $C$, so $x-1$ annihilates  $H^1(C,E_{p^\infty})$ (see  \cite[Chap. I, Lemma 6.21]{Milne}).  Therefore, $p^nH^1(C,E_{p^\infty})=0.$ Now consider the short exact sequence 
		\begin{equation}\label{eq:Epn}
		0 \rightarrow E_{p^n} \rightarrow E_{p^\infty} \xrightarrow{\times p^n} E_{p^\infty} \rightarrow 0.
		\end{equation}
		As $p^nH^1(C,E_{p^\infty})=0$, the long exact sequence corresponding to \eqref{eq:Epn}  gives rise to the following short exact sequence
		\begin{equation}\label{a2}
		0 \rightarrow H^1(C, E_{p^\infty}) \rightarrow H^2(C,E_{p^n}) \rightarrow H^2(C,E_{p^\infty}) \rightarrow 0.
		\end{equation}
		Now, $C$ has $p$-cohomological dimension $1$ and hence $H^2(C,E_{p^n})=0$. Therefore $H^1(C, E_{p^\infty})=0$ by \eqref{a2}. This shows that the map $\beta$ is injective. 
		
		Writing  $\gamma=\oplus_{v \in S} \gamma_v$,  Shapiro's lemma (cf. \cite[Chap. VII, Prop. 7.2]{CasselFrohlich}, see also the proof of \cite[Lemma 2.8]{CoatesHowsonII})  gives
		\begin{equation}\label{eq:kernel-local map}
		\Ker(\gamma_v)=H^1(\Theta_{w}^C, E(F_{\infty, w}))(p),
		\end{equation}
		
		where $w$ is a prime of $F_\infty$ above $v$ and $\Theta_{w}^C$ is the decomposition group of $C$ at $w$. In the following, it will be shown  that
		\begin{equation}\label{impnote}
		H^1(\Theta_w^C,E_{p^\infty})=0.
		\end{equation}
		The decomposition subgroup $\Theta_w^C$ is a  subgroup of $C$ which is a subgroup of $\Z_p^\times = \mu_{p-1} \times (1+p\Z_p)$ where $\mu_{p-1}$ is the group of $(p-1)$-roots of unity. Therefore $\Theta_w^C$ is of the form $\Delta \times H$ where $\Delta$ is a finite group of order prime to $p$ and $H$ is a subgroup of $1+p\Z_p$ and hence a procyclic group. Let $\sigma$ be a topological generator of $H$, $\sigma=1+p^n$ for some $n$. By \cite[Prop. 1.7.7]{Neukirch}, $$H^1(H,E_{p^\infty})=E_{p^\infty}/(\sigma-1)E_{p^\infty}=E_{p^\infty}/p^nE_{p^\infty}.$$
		As $E_{p^\infty}$ is a $p$-divisible group, $E_{p^\infty}/p^nE_{p^\infty}=0$ , giving us $H^1(H,E_{p^\infty})=0$. 
		
		Also note that as $\Delta$ is a finite group of order prime to $p$, it follows that  $H^1(\Delta, E_{p^\infty}^H)=0.$
		The following  exact sequence
		$$0 \rightarrow H^1(\Delta,E_{p^\infty}^H) \rightarrow H^1(\Delta \times H, E_{p^\infty}) \rightarrow H^1(H,E_{p^\infty})^{\Delta},$$
		gives $H^1(\Delta \times H, E_{p^\infty})=0$ which proves \eqref{impnote}.
		
		If $v \nmid p$, by Kummer theory  \cite[Sec. 2]{Greenberg} $H^1(\Theta_{w}^C, E(F_{\infty, w}))(p)=H^1(\Theta_w^C,E_{p^\infty})$ which vanishes by \eqref{impnote}. This proves that $\Ker(\gamma_v)=0$ when $v \nmid p$.

		Now suppose that $v \mid p$.  Let $u$ be the prime of $K_\infty$ such that $w\mid u$ and $u\mid v$ and $I_u$ be the inertia subgroup of $K_\infty$ over $F$ at the prime $u$. Since $I_u$ is infinite, $K_{\infty, u}$ is deeply ramified (see  \cite{CoatesGreenberg}). Thus by \cite[Prop. 4.8, Thm. 2.13]{CoatesGreenberg},  $$\ker(\gamma_v) =H^1(\Theta_w^C,E(F_{\infty,w}))(p) \cong H^1(\Theta_w^C,D), $$
		where $D$ can be identified with $\Tilde{E}_{v,p^\infty}$, the $p$-primary subgroup of the reduction of $E$ modulo $v$.  By \cite{CoatesGreenberg}, the module $D$  also satisfies the  following exact sequence
		\begin{equation}\label{eq:D-seq}
		0 \rightarrow C^\prime \rightarrow E_{p^\infty} \rightarrow D \rightarrow 0
		\end{equation}
		
		where $C^\prime$ is divisible and $D$ is the maximal quotient of $E_{p^\infty}$ by a divisible subgroup such that $I_v$ acts on $D$ via a finite quotient. The exact sequence \eqref{eq:D-seq} gives that the following sequence $$ H^1(\Theta_w^C,E_{p^\infty}) \rightarrow H^1(\Theta_w^C,D) \rightarrow H^2(\Theta_w^C,C)$$ is exact. By  \eqref{impnote} we know that $H^1(\Theta_w^C,E_{p^\infty}) =0$ and since $\Theta_w^C$ is a subgroup of $C$ which has $p$-cohomological dimension $1$, we have $H^2(\Theta_w^C,C)=0$. Therefore, it is clear that $H^1(\Theta_w^C,D)=0 $ which shows that $\ker(\gamma_v)=0$ for $v \mid p$. 
		
		Thus the maps  $\beta$ and $\gamma$ is are isomorphisms.  The theorem now follows from the snake lemma applied to the fundamental diagram \eqref{fundamental_diagram}.

	\end{proof}
	
	\begin{corollary}\label{cor:descend}
		If $H^1(G, \Sel(E/F_\infty))$ is finite, then $H^1(\PG, \Sel(E/K_\infty))$ is also finite.
	\end{corollary}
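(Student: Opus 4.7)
The plan is to combine the identification $\Sel(E/K_\infty) \cong \Sel(E/F_\infty)^C$ from Theorem \ref{thm:main} with the low-degree five-term exact sequence attached to the Hochschild--Serre spectral sequence
\[
H^r(\PG, H^s(C, \Sel(E/F_\infty))) \Longrightarrow H^{r+s}(G, \Sel(E/F_\infty)).
\]
The $p$-cohomological dimension of $C$ is $1$, so the $E_2$-page is concentrated in the two rows $s=0,1$; in particular the edge map at $(r,s)=(1,0)$ is injective on the page.

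First, I would invoke Theorem \ref{thm:main} to rewrite the target as $H^1(\PG, \Sel(E/K_\infty)) = H^1(\PG, \Sel(E/F_\infty)^C)$, since the isomorphism there is one of $\Lambda(\PG)$-modules. Next I would extract the standard five-term exact sequence from \eqref{eq:hochserre} with $H = C$ and $G/H = \PG$, applied to the discrete $G$-module $M = \Sel(E/F_\infty)$:
\[
0 \to H^1(\PG, M^C) \to H^1(G, M) \to H^0(\PG, H^1(C, M)) \to H^2(\PG, M^C) \to H^2(G,M).
\]
The first map produces an injection $H^1(\PG, \Sel(E/F_\infty)^C) \hookrightarrow H^1(G, \Sel(E/F_\infty))$. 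Under the hypothesis that $H^1(G, \Sel(E/F_\infty))$ is finite, this forces $H^1(\PG, \Sel(E/K_\infty))$ to be finite as well.

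I do not anticipate any serious obstacle here: the whole argument is a one-line consequence of Theorem \ref{thm:main} together with the fact that $\mathrm{cd}_p(C)=1$ already used in the proof of that theorem, which guarantees the five-term sequence in the required form. The only thing to be mindful of is that Hochschild--Serre is being applied to a discrete $p$-primary module rather than to its Pontryagin dual, so no additional compactness or finite-generation hypothesis is needed to justify the spectral sequence itself.
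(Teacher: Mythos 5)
Your proposal is correct and matches the paper's proof: both use the identification $\Sel(E/K_\infty)\cong\Sel(E/F_\infty)^C$ from Theorem \ref{thm:main} together with the injection $H^1(\PG,\Sel(E/F_\infty)^C)\hookrightarrow H^1(G,\Sel(E/F_\infty))$ coming from the inflation map in the Hochschild--Serre five-term exact sequence. The paper states this in one line; you simply spell out the spectral-sequence details more explicitly.
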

	\begin{proof}
		The assertion follows from the natural  injection $$H^1\Big(PG, \big(\Sel(E/F_\infty)\big)^C\Big)\hookrightarrow H^1(G, \Sel(E/F_\infty)),$$
		and Theorem \ref{thm:main}.
	\end{proof}
	The reader is referred to Theorem \ref{thm:CoatesHowson}  in Section 2 for cases where 
	$H^1(G,\Sel(E/F_\infty))$ is known to be finite.
	\setcounter{subsection}{2}
	\subsection{Conditions when the Selmer over $\PGL(2)$ extension is cotorsion}\label{sec:main}
	\setcounter{theorem}{3}
	
	The following theorems give several conditions when   $\widehat{\Sel(E/K_\infty)}$ is torsion as a $\Lambda(\PG)$-module.
	\begin{theorem}
		Assume weak Leopoldt's conjecture at $K_\infty$, that is, $H^2(F_S/K_\infty, E_{p^\infty})=0$. Then the dual Selmer group $\widehat{\Sel(E/K_\infty)}$ is $\Lambda(\PG)$-torsion if and only if the map  $\lambda_{K_\infty}$ in \eqref{fundamental_diagram} is surjective.
	\end{theorem}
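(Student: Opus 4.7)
The plan is to run a $\Lambda(\PG)$-rank count on the four-term exact sequence obtained from the definition of the Selmer group, with weak Leopoldt supplying the key rank identity between the global and local cohomology modules. Write $X = H^1(F_S/K_\infty, E_{p^\infty})$, $Y = \oplus_{v \in S} J_v(K_\infty)$ and $R = \text{coker}(\lambda_{K_\infty})$, so that \eqref{eq:def-Selmer} over $K_\infty$ extends to
\begin{equation*}
0 \to \Sel(E/K_\infty) \to X \xrightarrow{\lambda_{K_\infty}} Y \to R \to 0,
\end{equation*}
a sequence of cofinitely generated discrete $\Lambda(\PG)$-modules. Pontryagin dualizing and using additivity of $\Lambda(\PG)$-rank on short exact sequences of finitely generated modules yields
\begin{equation*}
\text{rank}_{\Lambda(\PG)}\,\widehat{\Sel(E/K_\infty)} - \text{rank}_{\Lambda(\PG)}\,\widehat{X} + \text{rank}_{\Lambda(\PG)}\,\widehat{Y} - \text{rank}_{\Lambda(\PG)}\,\widehat{R} = 0.
\end{equation*}

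The heart of the proof is to show that $\text{rank}_{\Lambda(\PG)}\,\widehat{X} = \text{rank}_{\Lambda(\PG)}\,\widehat{Y}$ under the weak Leopoldt hypothesis. For the global side I would mimic the $GL(2)$ Euler-Poincar\'e calculation of Coates-Howson (see \cite{CoatesHowsonI,CoatesHowsonII}): the vanishing of $H^2(F_S/K_\infty, E_{p^\infty})$, together with the $p$-cohomological dimension of $\Gal(F_S/K_\infty)$ and the fact that $\PG$ has dimension $3$ when $G = GL_2(\Z_p)$, pins down the $\Lambda(\PG)$-rank of $\widehat{X}$. For the local side, Shapiro's lemma (as already employed in the proof of Theorem \ref{thm:main}) rewrites each $\widehat{J_v(K_\infty)}$ as a module induced from the decomposition subgroup of $\PG$ at $v$, with a local rank governed by whether $v \mid p$; the deep ramification analysis of \cite{CoatesGreenberg} then supplies the value at primes above $p$. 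A term-by-term comparison produces the claimed equality.

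Granting the rank identity, the displayed alternating sum collapses to $\text{rank}_{\Lambda(\PG)}\,\widehat{\Sel(E/K_\infty)} = \text{rank}_{\Lambda(\PG)}\,\widehat{R}$, so $\widehat{\Sel(E/K_\infty)}$ is $\Lambda(\PG)$-torsion if and only if $\widehat{R}$ is. The surjective-implies-torsion direction is then immediate from $R = 0$. For the converse, I would invoke Poitou-Tate global duality for $E_{p^\infty}$ over $K_\infty$: combined with $H^2(F_S/K_\infty, E_{p^\infty}) = 0$, this identifies $R$ with the Pontryagin dual of a compact Selmer-type module built from the Tate module $T_pE$, which naturally sits inside $H^1(F_S/K_\infty, T_pE)$. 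Torsion-freeness properties of the latter then upgrade ``$\widehat{R}$ torsion'' to ``$\widehat{R} = 0$'', i.e.~$R = 0$. The main obstacle in executing this plan is this final Poitou-Tate step, which requires care to formulate correctly over a noncommutative $p$-adic Lie extension and to confirm the required torsion-freeness in the $\Lambda(\PG)$-setting.
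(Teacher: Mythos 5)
The paper's ``proof'' of this theorem is a single sentence deferring entirely to \cite[Thm.~7.2]{ShekharSujatha}, so there is no in-paper argument to match your outline against line by line. That said, your sketch reconstructs exactly the standard machinery that a theorem of this shape rests on: extend the defining sequence \eqref{eq:def-Selmer} to a four-term exact sequence ending in $R=\Coker(\lambda_{K_\infty})$, take $\Lambda(\PG)$-ranks, show the global and local terms cancel under weak Leopoldt, and then use the Cassels--Poitou--Tate identification of $\widehat{R}$ with a compact Selmer group inside $H^1(F_S/K_\infty, T_pE)$, whose torsion-freeness over $\Lambda(\PG)$ promotes ``$\widehat{R}$ torsion'' to ``$R=0$''. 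That is the same route the cited reference takes, so your approach is not genuinely different -- it is the argument that the paper outsources.

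Two remarks on where the real content sits, both of which you flag yourself. First, the $\Lambda(\PG)$-rank equality $\rank\widehat{X}=\rank\widehat{Y}$ is not free: it requires a global Euler--Poincar\'e computation for $\Gal(F_S/K_\infty)$ acting on $E_{p^\infty}$ together with the matching local computation at each $v\in S$ (trivial contribution for $v\nmid p$, and at $v\mid p$ the deeply-ramified/ordinary reduction analysis of \cite{CoatesGreenberg} producing the rank $[F_v:\Q_p]$ term); these must be carried out over $\Lambda(\PG)$, not inherited directly from the $\Lambda(G)$ computation of Coates--Howson. Second, the converse direction hinges on $H^1(F_S/K_\infty, T_pE)$ having no nonzero $\Lambda(\PG)$-torsion submodule -- this is true (it is the torsion-freeness result of Ochi--Venjakob type), but it needs to be invoked, not just the vague phrase ``torsion-freeness properties''. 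Neither of these is a wrong idea; they are precisely what \cite[Thm.~7.2]{ShekharSujatha} supplies, so your outline is the correct skeleton with those two pieces left as black boxes.
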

	\begin{proof}
		The proof follows from \cite[Thm. 7.2]{ShekharSujatha}. 
	\end{proof}
	
	\begin{theorem}\label{mainTheoremtorsion}
		The dual Selmer group $\widehat{\Sel(E/K_\infty)}$ is a torsion  $\Lambda(\PG)$-module if any of the following conditions hold:
		\begin{enumerate}
			\item The  Selmer group $\Sel(E/F)$ is finite and $H_2(\PG, \widehat{\Sel(E/K_\infty)})$ is finite.
			\item The Selmer group $\Sel(E/F)$ is finite, $\widehat{\Sel(E/F_\infty)}$ is a torsion  $\Lambda(G)$-module and $H_0(\PG, H_1(C, \widehat{\Sel(E/F_\infty)})$ is finite.
			\item $\widehat{\Sel(E/F_\infty)}$ is a torsion $\Lambda(G)$-module and $C$ acts as a nonzero divisor on  $\widehat{\Sel(E/F_\infty)}$.
		\end{enumerate}
		Conversely, if  $\widehat{\Sel(E/K_\infty)}$ is a  torsion $\Lambda(\PG)$-module then   $(3)$ holds.
	\end{theorem}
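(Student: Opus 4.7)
The plan is to translate everything to the dual module $X := \widehat{\Sel(E/F_\infty)}$ using Theorem \ref{thm:main}, which identifies $\widehat{\Sel(E/K_\infty)}$ with $X_C := H_0(C, X) = X/TX$ as $\Lambda(\PG)$-modules. Because $C$ is pro-$p$, abelian (central in $G$), and of $p$-cohomological dimension $1$, it is procyclic; pick a topological generator $\gamma$ and put $T := \gamma - 1 \in \Lambda(C) \subseteq \Lambda(G)$. Using $\mathrm{cd}_p(C) = 1$ one has the four-term exact sequence of $\Lambda(G)$-modules
\[
0 \to X^C \to X \xrightarrow{T} X \to X_C \to 0,
\]
with $X^C := H_1(C, X) = \Ker(T)$, and every case of the theorem reduces to analysing this sequence.

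For the forward direction of (3), the hypothesis that $T$ acts as a non-zero divisor on $X$ gives $X^C = 0$, so the sequence collapses to $0 \to X \xrightarrow{T} X \to X_C \to 0$. Venjakob's dimension theory for the Auslander regular ring $\Lambda(G)$ \cite{Venjakob1} then yields $\dim_{\Lambda(G)}(X_C) \le \dim_{\Lambda(G)}(X) - 1 \le \dim \Lambda(G) - 2 = \dim \Lambda(\PG) - 1$; since $T$ annihilates $X_C$, its $\Lambda(\PG)$-dimension coincides with its $\Lambda(G)$-dimension, so $X_C$ is $\Lambda(\PG)$-torsion.

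For parts (1) and (2), I would use the Hochschild--Serre spectral sequence $E^2_{p,q} = H_p(\PG, H_q(C, X)) \Rightarrow H_{p+q}(G, X)$, which collapses to a long exact sequence linking $H_*(G, X)$, $H_*(\PG, X_C)$, and $H_*(\PG, X^C)$ because $H_q(C, X) = 0$ for $q \ge 2$. Dualising Theorem \ref{thm:CoatesHowson}, the finiteness of $\Sel(E/F)$ yields $H_i(G, X)$ finite for $i = 0, 1$, and under the additional hypothesis of (2) that $X$ is $\Lambda(G)$-torsion, $H_i(G, X) = 0$ for $i \ge 2$. Propagating the respective finiteness hypothesis in (1) on $H_2(\PG, X_C)$ or in (2) on $H_0(\PG, H_1(C, X))$ through this long exact sequence then forces every $H_i(\PG, X_C)$ to be finite. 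The standard criterion that a finitely generated $\Lambda(\PG)$-module with all homology groups finite has zero $\Lambda(\PG)$-rank (see \cite[Ch. 5]{Howson_thesis}) concludes the torsion-ness of $X_C$.

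For the converse, the strategy is a rank identity
\[
\rank_{\Lambda(\PG)}(X_C) = \rank_{\Lambda(G)}(X) + \rank_{\Lambda(\PG)}(X^C)
\]
coming from Ore-localising the four-term exact sequence at the powers of the central non-zero divisor $T \in \Lambda(G)$. When $X_C$ is $\Lambda(\PG)$-torsion the left-hand side vanishes, forcing both $X$ to be $\Lambda(G)$-torsion and $X^C$ to be $\Lambda(\PG)$-torsion, hence pseudo-null as a $\Lambda(G)$-submodule of $X$. Invoking the (structurally available) vanishing of pseudo-null submodules of the dual Selmer, accessible via the structure theorem recalled in the introduction, then upgrades this to $X^C = 0$, so $T$ acts as a non-zero divisor on $X$ and (3) holds. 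The main obstacle I expect is precisely this converse direction: both establishing the rank identity carefully in the noncommutative setting and promoting ``$X^C$ torsion'' to ``$X^C = 0$'' go beyond the purely dimension-theoretic argument used in the forward direction of (3) and require genuine input from the structure theory of $\widehat{\Sel(E/F_\infty)}$.
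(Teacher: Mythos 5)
Your proposal is essentially correct and matches the paper's strategy, which is: use Theorem \ref{thm:main} to replace $\widehat{\Sel(E/K_\infty)}$ by $M_C$ with $M = \widehat{\Sel(E/F_\infty)}$, establish a rank identity relating $\rank_{\Lambda(\PG)} M_C$, $\rank_{\Lambda(\PG)} H_1(C,M)$ and $\rank_{\Lambda(G)} M$, combine it with the Coates--Howson finiteness results (Theorem \ref{thm:CoatesHowson}, via Corollary \ref{cor:descend}) and, for the converse, invoke Ochi--Venjakob's theorem that $M$ has no nonzero pseudonull submodules. Your four-term exact sequence $0 \to H_1(C,M) \to M \xrightarrow{T} M \to M_C \to 0$ is just a clean packaging of the same data the paper handles via the Hochschild--Serre spectral sequence and the homological Euler-characteristic formula.

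The one place you genuinely diverge from the paper is the forward direction of (3). The paper establishes the identity
\[
\rank_{\Lambda(G)} M = \rank_{\Lambda(\PG)} M_C - \rank_{\Lambda(\PG)} H_1(C,M)
\]
from the Euler characteristic over $G$ and the two-row spectral sequence, and then reads off $\rank_{\Lambda(\PG)} M_C = 0$ once $M$ is torsion and $H_1(C,M)=0$. You instead argue dimension-theoretically: from $0 \to M \xrightarrow{T} M \to M_C \to 0$ and Auslander regularity you get $j_{\Lambda(G)}(M_C) \ge j_{\Lambda(G)}(M)+1$, hence $\dim_{\Lambda(G)} M_C \le \dim_{\Lambda(G)} M - 1 \le \dim\Lambda(G)-2 = \dim\Lambda(\PG)-1$, and then transfer dimensions along $\Lambda(G) \twoheadrightarrow \Lambda(\PG)$. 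This is a legitimate alternative, but the grade-drop inequality for a central nonzero divisor deserves at least a sentence (it follows from the $\mathrm{Ext}$-long exact sequence attached to multiplication by $T$, using Auslander regularity); it is less off-the-shelf than the rank identity, which the paper reuses verbatim in the converse. Two minor points worth tightening: in (1) and (2) you assert that the hypotheses force \emph{every} $H_i(\PG, M_C)$ to be finite, but the argument only directly gives $H_0, H_1, H_2$; one then uses the nonnegativity of $\rank_{\Lambda(\PG)}$ in the Euler-characteristic formula to force $H_3$ finite as well (the paper is equally terse here). And your phrase ``Ore-localising the four-term exact sequence'' for the rank identity is better replaced by the explicit Euler-characteristic computation over $G$ and over $\PG$, which is what actually proves it.
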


	\begin{proof}
		Let $M=\widehat{\Sel(E/F_\infty)}$. Since 
		$\Sel(E/F)$ is finite, the cohomology groups $H_0(\PG, M_C)$ and $H_1(\PG, M_C)$ are finite (see Theorem \ref{thm:CoatesHowson} and Corollary \ref{cor:descend}). Since
		\begin{align*}
		\rank_{\Lambda(\PG)}M_C &=\sum_{k \geq 0}^{3}(-1)^k\rank_{\Z_p}H_k(\PG,M_C),
		\end{align*}
		we obtain $\rank_{\Lambda(\PG)}M_C = 0$ and hence $M_C$ is $\Lambda(\PG)$-torsion. This proves (1).
		
		To show that the condition in (2) is also sufficient, note that if $M$ is $\Lambda(G)$-torsion and $\Sel(E/F)$ is finite,  then  $H_1(G,M)$ is finite and $H_2(G,M)=0$ (see Theorem \ref{thm:CoatesHowson}). By  Hochschild-Serre spectral sequence, we conclude that  $H_0(\PG, H_1(C, M))$ is finite if and only if $H_2(\PG,M)$ is finite, and hence (2) follows from (1).
		
		For (3), note that it follows from Hochschild-Serre spectral sequence that 
		\begin{align}
		\rank_{\Lambda(G)}M &=\sum_{k \geq 0}(-1)^k\rank_{\Z_p}H_k(G,M)\\
		&=\rank_{\Lambda(\PG)}M_C - \rank_{\Lambda(\PG)}H_1(C,M) \label{2}
		\end{align}
		
		Suppose (3) holds. Since $H_1(C,M)$  is precisely the $\Lambda(G)$-submodule of $M$ consisting of the elements in $M$ annihilated  by the augmentation ideal $I(C)=\langle c-1 \rangle$, we have $H_1(C,M)=0$. Hence the conclusion follows from \eqref{2} and Theorem \ref{thm:main}. 
		
		Finally, suppose
		if $M_C \cong \widehat{\Sel(E/K_\infty)}$ is a torsion $\Lambda(\PG)$-module. 
		Then it follows from \eqref{2} that $M$ is torsion as a $\Lambda(G)$-module and $H_1(C,M)$ is torsion as a $\Lambda(\PG)$-module. Then $H_1(C,M)$ is pseudonull as a $\Lambda(G)$-module. But $M$ has no nonzero pseudonull submodules (see \cite[Thm. 5.1]{OchiVenjakob_On_the_structure_of}) and therefore $H_1(C,M)=0$ and hence $C$ acts as a nonzero divisor on $M$.
		
	\end{proof}

	The following theorem  gives a restatement of condition (3) in Theorem \ref{mainTheoremtorsion} using the structure theorem of dual Selmer groups over noncommutative Iwasawa algebras \cite{CoatesSchneiderSujatha-modules}.
	By \textit{(loc.cit)}, there is an injection of $\Lambda(G)$-modules $$ \oplus_{i=1}^m\Lambda(G)/J_i \hookrightarrow M/M_0  $$ with pesudonull cokernel. Here $J_i$ 's are reflexive ideals in $\Lambda(G)$ which are pure of grade $1$ and $M_0$ is the maximal pseudonull submodule of $M$.   But $M$ has no nontrivial pseudonull submodule and therefore $M_0=0.$ This gives  the exact sequence 
	\begin{equation}\label{eq:exacttorsion}
	0 \rightarrow \oplus_{i=1}^m\Lambda(G)/J_i \rightarrow M \rightarrow N \rightarrow 0
	\end{equation}
	where $N$ is a pseudonull  $\Lambda(G)$-module. 
	
	\begin{theorem}\label{thm:simplifiedMainTheoremTorsion}
		The Selmer group $\Sel(E/K_\infty)$ is a cotorsion $\Lambda(\PG)$-module if and only if $\Sel(E/F_\infty)$ is a cotorsion $\Lambda(G)$-module and $C$ acts as a nonzero divisor on $\Lambda(G)/J_i$  for all $i \geq 1. $ 
	\end{theorem}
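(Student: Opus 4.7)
The plan is to leverage the equivalence already contained in Theorem \ref{mainTheoremtorsion}: setting $M=\widehat{\Sel(E/F_\infty)}$, part $(3)$ together with its converse says that $\widehat{\Sel(E/K_\infty)}$ is $\Lambda(\PG)$-torsion if and only if $M$ is $\Lambda(G)$-torsion and $H_1(C,M)=0$ (the latter being precisely the statement that $C$ acts as a non-zero divisor on $M$). Given this, the task reduces to matching the vanishing $H_1(C,M)=0$ against the condition that $C$ acts as a non-zero divisor on each cyclic summand $\Lambda(G)/J_i$ appearing in the structure-theorem sequence \eqref{eq:exacttorsion}.

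For the forward direction, suppose $\widehat{\Sel(E/K_\infty)}$ is $\Lambda(\PG)$-torsion. The converse part of Theorem \ref{mainTheoremtorsion} furnishes both that $M$ is $\Lambda(G)$-torsion and that multiplication by $c-1$ (for $c$ a topological generator of the procyclic quotient of $C$) is injective on $M$. The exact sequence \eqref{eq:exacttorsion} exhibits $\oplus_{i=1}^{m}\Lambda(G)/J_i$ as a $\Lambda(G)$-submodule of $M$; since $C$ is central in $G$, $c-1$ is a $\Lambda(G)$-linear endomorphism and its injectivity descends to every submodule, in particular to each $\Lambda(G)/J_i$.

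For the converse, assume $M$ is $\Lambda(G)$-torsion and that $c-1$ is a non-zero divisor on each $\Lambda(G)/J_i$, so $H_1(C,\oplus_{i=1}^{m}\Lambda(G)/J_i)=0$. Applying the long exact sequence of $C$-homology to \eqref{eq:exacttorsion}, and using that $C$ has $p$-cohomological dimension one (so $H_j(C,-)$ vanishes for $j\geq 2$ on the compact $\Lambda(G)$-modules in sight, via Pontryagin duality with the continuous cohomology of discrete $p$-primary modules), yields an injection $H_1(C,M)\hookrightarrow H_1(C,N)$. Now $H_1(C,N)$ is a $\Lambda(G)$-submodule of the pseudonull module $N$, hence itself pseudonull over $\Lambda(G)$. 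Therefore $H_1(C,M)$ is a pseudonull $\Lambda(G)$-submodule of $M$, and since $M$ has no nonzero pseudonull submodule by \cite[Thm.~5.1]{OchiVenjakob_On_the_structure_of}, we conclude $H_1(C,M)=0$. Invoking Theorem \ref{mainTheoremtorsion}(3) finishes the argument.

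The main point that needs care is the converse: one must be vigilant that the pseudonullity of $H_1(C,N)$ is being invoked over $\Lambda(G)$, not over $\Lambda(\PG)$, so that the Ochi--Venjakob absence-of-pseudonull-submodules theorem for $M$ can be applied. The bridge is the observation that $H_1(C,M)=M^C$ sits naturally inside $M$ as a $\Lambda(G)$-submodule (since $C$ is central), so the dimension comparison is taken in the correct ring. The remainder of the argument is a clean combination of the structure sequence \eqref{eq:exacttorsion} with the equivalence already packaged in Theorem \ref{mainTheoremtorsion}.
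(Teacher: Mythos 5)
Your proof is correct, and the forward direction (torsion over $\Lambda(\PG)$ implies the two conditions) follows essentially the same path as the paper's, since both reduce immediately to the converse part of Theorem~\ref{mainTheoremtorsion}. The converse direction, however, takes a genuinely different route. The paper works on the right-hand half of the six-term sequence \eqref{exact1}--\eqref{exact2}: it shows $(\Lambda(G)/J_i)_C$ is $\Lambda(\PG)$-torsion by a dimension count ($J_i \nsubseteq I(C)$ forces $\dim_{\Lambda(\PG)}(\Lambda(G)/J_i)_C < \dim\Lambda(\PG)$), observes $N_C$ is torsion because $N$ is pseudonull over $\Lambda(G)$, and concludes $M_C$ is torsion by exactness. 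You instead work on the left-hand half: from $H_1(C,\oplus_i \Lambda(G)/J_i)=0$ you get $H_1(C,M)\hookrightarrow H_1(C,N)$, identify $H_1(C,\cdot)$ with $C$-invariants (valid because $C$ is pro-$p$ procyclic and central), note $H_1(C,N)\subseteq N$ is pseudonull over $\Lambda(G)$, apply the Ochi--Venjakob theorem that $M$ has no nonzero pseudonull $\Lambda(G)$-submodule to conclude $H_1(C,M)=0$, and then re-invoke Theorem~\ref{mainTheoremtorsion}(3). Your approach is arguably the cleaner logical packaging, since it genuinely exhibits the new theorem as a corollary of the earlier equivalence and only uses the structure theorem to transfer the non-zero-divisor condition from the summands $\Lambda(G)/J_i$ to $M$ itself, whereas the paper's proof re-derives the torsion conclusion directly from the exact sequence and does not pass back through Theorem~\ref{mainTheoremtorsion}; on the other hand the paper's argument makes visible the explicit dimension drop under coinvariants, which is useful information in its own right.
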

	\begin{proof}
		Since the center $C$ has $p$-cohomological dimension $1$, the sequence \eqref{eq:exacttorsion}, gives the following exact sequence of $\Lambda(\PG)$-modules.
		\begin{align}
		0\rightarrow H_1(C, \oplus_{i=1}^m \Lambda(G)/J_i) &\rightarrow H_1(C,M) \rightarrow H_1(C,N)  \label{exact1}\\
		&\rightarrow (\oplus_{i=1}^m\Lambda(G)/J_i)_C \rightarrow M_C \rightarrow N_C \rightarrow 0.\label{exact2}
		\end{align}

		If $C$ acts as a nonzero divisor on $\Lambda(G)/J_i$, then $J_i \nsubseteq I(C)$ and hence
		\begin{equation}\label{eq:short}
		\dim_{\Lambda(\PG)}(\Lambda(G)/J_i)_C <\dim\Lambda(\PG)=4,
		\end{equation}
		which implies that $(\Lambda(G)/J_i)_C $ is $\Lambda(\PG)$-torsion. Now, as $N$ is a pseudonull $\Lambda(G)$-module,    $$\dim_{\Lambda(G)}N \leq \dim\Lambda(G)-2=\dim\Lambda(\PG)-1.$$ This implies that $N_C$  is a torsion $\Lambda(\PG)$-module and the same holds for the module  $M_C$  from \eqref{eq:short} and \eqref{exact2}. 
		
		Conversely, suppose that $M_C \cong \widehat{\Sel(E/K_\infty)}$ is torsion as a $\Lambda(\PG)$-module. Then by Theorem \ref{mainTheoremtorsion}, $M$ is torsion as a $\Lambda(G)$-module and $C$ acts as a nonzero divisor on $M$. In particular, $H_1(C, M)=0$ whence $H_1(C, \oplus_{i=1}^m \Lambda(G)/J_i)=0$ by \eqref{exact1}. This implies that $C$ is a nonzero divisor on the summands $\Lambda(G)/J_i$.
		
	\end{proof}
	


	\begin{remark}
		We note that the ideals $J_i$ cannot be $I(C)$ for all $ i\geq 1$ because the center $C$ cannot act trivially on the Selmer group at $F_\infty$ (see Prop. \ref{prop:centerNonTrivial}).  
		
		Suppose none of the ideals $J_i$ are contained in $I(C)$.  Then  $\widehat{\Sel(E/K_\infty)}$ is torsion as a $\Lambda(\PG)$-module. On the other hand, if  $J_i=I(C)$ for some $i$, then  $\widehat{\Sel(E/K_\infty)}$ cannot be torsion as a $\Lambda(\PG)$-module (see Theorem \ref{thm:simplifiedMainTheoremTorsion}). 
		
		Suppose $\Sel(E/F_\infty)$ is  cotorsion as a $\Lambda(G)$-module, $\Sel(E/F)$ and $H_1(G,N)$ are finite.  Then none of the ideals $J_i$ can be contained in  $I(C)$.	 This is because, from  \eqref{eq:exacttorsion}  we obtain the exact sequence $$H_1(G,N) \rightarrow  (\oplus_{i=1}^m\Lambda(G)/J_i)_G \rightarrow M_G $$ whose first and last terms are finite. In this case, $\widehat{\Sel(E/K_\infty)}$ is torsion as a $\Lambda(\PG)$-module.
	\end{remark}

	In \cite[Prop. 8.10]{CoatesSchneiderSujatha-modules}, for the elliptic curve $E=X_1(11): y^2+y=x^3-x^2$ of conductor $11$ and prime $p=5$, the authors show that the center $C$ cannot act trivially on  $\Sel(E/F_\infty)$. The following proposition generalizes this for any elliptic curve without complex multiplication and with good ordinary reduction  for the primes above  $p$. 
	\begin{proposition}\label{prop:centerNonTrivial}
		Suppose $G \cong C \times \PG$, $\Sel(E/F_\infty)$ is a cotorsion $\Lambda(G)$-module and $\Sel(E/F)$ is finite. Then the center $C$ cannot act trivially on $\Sel(E/F_\infty)$.
	\end{proposition}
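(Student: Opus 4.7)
The strategy is a proof by contradiction: assume $C$ acts trivially on $M := \widehat{\Sel(E/F_\infty)}$ and that $M \neq 0$ (otherwise the claim is vacuous). I will show this forces $M$ to be $\Lambda(\PG)$-torsion, whereupon the converse direction in Theorem \ref{mainTheoremtorsion} makes $C$ act as a nonzero divisor on $M$, in direct conflict with the trivial action hypothesis.

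First I record that $C$, lying in the scalar matrices of $GL_2(\Z_p)$, is procyclic of dimension $1$ (up to a finite subgroup of order prime to $p$), so $H_q(C,-) = 0$ for $q \geq 2$, and under trivial action $H_0(C, M) = H_1(C, M) = M$. Using the splitting $G \cong C \times \PG$, a K\"unneth decomposition (equivalently, the degeneration of the Hochschild--Serre spectral sequence for the split extension) yields
$$H_n(G, M) \cong H_n(\PG, M) \oplus H_{n-1}(\PG, M), \qquad n \geq 0.$$
Now I apply Theorem \ref{thm:CoatesHowson}: the hypotheses that $\Sel(E/F)$ is finite and that $M$ is $\Lambda(G)$-torsion imply that $H_i(G, M)$ is finite for $i = 0, 1$ and vanishes for $i = 2, 3, 4$. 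Reading these conditions off the decomposition gives $H_0(\PG, M)$ finite and $H_i(\PG, M) = 0$ for $i = 1, 2, 3$.

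Since $\dim \PG = 3$ and all the $H_i(\PG, M)$ are $\Z_p$-finite, the Howson rank formula for compact $p$-adic Lie groups yields
$$\rank_{\Lambda(\PG)}(M) = \sum_{i=0}^{3}(-1)^i \rank_{\Z_p} H_i(\PG, M) = 0,$$
so $M$ is $\Lambda(\PG)$-torsion. By Theorem \ref{thm:main} and triviality of the $C$-action, $\widehat{\Sel(E/K_\infty)} \cong M_C = M$, hence $\widehat{\Sel(E/K_\infty)}$ is also $\Lambda(\PG)$-torsion. The converse in Theorem \ref{mainTheoremtorsion} now forces $H_1(C, M) = M^C = 0$; but trivial $C$-action gives $M^C = M \neq 0$, the desired contradiction.

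The main obstacle is the clean justification of the K\"unneth step together with the applicability of the rank formula. If one prefers to work directly with Hochschild--Serre, one must verify that the possibly nonzero differentials $d_2 : H_p(\PG, M) \to H_{p-2}(\PG, M)$ still force the above vanishings from $H_j(G, M) = 0$ for $j \geq 2$; an explicit chase on $n = 0, 1, 2, 3, 4$ does this, the identifications $d_2 : H_3(\PG, M) \cong H_1(\PG, M)$ and $H_2(\PG, M) = H_3(\PG, M) = 0$ emerging one after another. A secondary issue is that the Howson rank formula is usually stated for torsion-free (or pro-$p$ without $p$-torsion) groups, so one may first pass to an open pro-$p$ subgroup of $\PG$ and use multiplicativity of Iwasawa rank under finite-index inclusions.
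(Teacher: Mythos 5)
Your overall strategy mirrors the paper's --- reduce to showing $\widehat{\Sel(E/K_\infty)} \cong M$ is $\Lambda(\PG)$-torsion via vanishing/finiteness of the $H_i(\PG, M)$ and the rank formula --- but you close the argument differently and more economically. Where the paper argues that $M$ would then be pseudonull as a $\Lambda(G)$-module, hence zero because dual Selmer groups have no nonzero pseudonull submodules, and finally contradicts the known infinite $\Q_p$-dimensionality of $M$ (citing Thm.~1.5 of \cite{Coates_Fragments}), you instead invoke the converse direction of Theorem~\ref{mainTheoremtorsion} to conclude $H_1(C,M) = M^C = 0$, which is incompatible with the trivial action giving $M^C = M$. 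This is a legitimate shortcut: the "no nonzero pseudonull submodule" input is already packaged into that converse, so you avoid re-running the dimension-theoretic argument. Your explicit K\"unneth/spectral-sequence chase is also a more detailed route to the vanishing of $H_i(\PG, M)$ for $i \geq 1$ than the paper's brief appeal to \cite[Prop.~2.4.5]{Neukirch}, and the chase you sketch does indeed succeed even without assuming degeneration.

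There is, however, one genuine gap. You write that you may assume $M \neq 0$ "otherwise the claim is vacuous" --- but this is backwards. If $M = 0$ then $C$ acts trivially on it, so the proposition would be \emph{false}, not vacuously true. Both your conclusion and the paper's ultimately reduce to "$M = 0$, contradiction," and the contradiction must come from somewhere: the paper supplies it by citing that $\widehat{\Sel(E/F_\infty)}$ is infinite-dimensional as a $\Q_p$-vector space \cite[Thm.~1.5]{Coates_Fragments}. You need the same input; your dismissal of the case $M = 0$ conceals a necessary step. Adding that single citation at the start (to justify $M \neq 0$) makes your proof complete.
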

	\begin{proof}
		If $C$ acts trivially on $\Sel(E/F_\infty)$, then $$\Sel(E/F_\infty) =\Sel(E/F_\infty)^C \cong \Sel(E/K_\infty).$$
		As $\Sel(E/F_\infty)$ is $\Lambda(G)$-cotorsion and $\Sel(E/F)$ is finite, the cohomology groups  $H^i(G,\Sel(E/F_\infty))$ are finite for all $i$. The degeneration of the Hochschild-Serre spectral sequence (see \cite[Prop. 2.4.5]{Neukirch}), gives an injection $$H^i(\PG,\Sel(E/K_\infty)) \hookrightarrow H^i(G,\Sel(E/F_\infty)).$$
		This implies that the cohomology groups $H^i(\PG,\Sel(E/K_\infty))$ are finite for all $i$. Hence $$\rank_{\Lambda(\PG)}\widehat{\Sel(E/K_\infty)}=\sum_{i \geq 0}(-1)^i\rank_{\Z_p}H_i(\PG, \widehat{\Sel(E/K_\infty)})=0,$$
		whereby $\Sel(E/K_\infty) $ is $\Lambda(\PG)$-cotorsion. We deduce that $$\dim_{\Lambda(G)}\widehat{\Sel(E/F_\infty)} \leq \dim\Lambda(\PG) -1 = \dim\Lambda(G)-2.$$ This would imply that $\widehat{\Sel(E/F_\infty)}$ is a pseudonull $\Lambda(G)$-module. But  $\widehat{\Sel(E/F_\infty)}$ has no nonzero pseudonull submodules. Hence $\widehat{\Sel(E/F_\infty)}=0$.  On the other hand it is known that $\widehat{\Sel(E/F_\infty)}$ is infinite dimensional as a $\Q_p$-vector space (see \cite[Thm. 1.5]{Coates_Fragments}). This gives us a contradiction.
	\end{proof}
	\begin{examples}
		Here are some examples  of elliptic curves $E$ such that $G=\Gal(F_\infty/F)$ is a direct product of its center $C$ and  $\PG$. We follow the nomenclature from Cremona tables \cite{Cremona}.
		\begin{enumerate}
			\item Let  $E$ be the elliptic curve $X_1(11)$, namely $E$ is the curve $y^2+y=x^3-x^2$ and prime $p=5$. This is a curve of conductor $11$ defined over $\Q$ but we consider it over $F=\Q(\mu_5)$.  Put $F_\infty=F(E_{5^\infty})$. Then $G=\Gal(F_\infty/F)$ has the form $G=C \times PG$ (\cite[Example 8.7, p. 104]{CoatesSchneiderSujatha-modules}).
			
			\item Let $E$ be the elliptic curve $X_0(11)$, namely $E$ is the curve $y^2+y=x^3-x^2-10x-20$ and $p=5$. This is a curve of conductor $11$ defined over $\Q$ but we consider it over $F=\Q(\mu_5)$.  Put $F_\infty=F(E_{5^\infty})$. Then the Galois group $G=\Gal(F_\infty/F)$ is a subgroup of the first congruence kernel of $GL_2(\Z_5)$.  \cite[Eqn. 3, p. 586]{Fisher}. Hence $G$ is of the form $C \times \PG$.
			
			\item For any general elliptic curve $E$ over $F$ without complex multiplication and with good ordinary reduction  for the primes above  $p$, we can always find an integer $k$ large enough such that, over the base field $F[E_{p^k}]$, the Galois group $G=\Gal(F_\infty/F[E_{p^k}])$ lies inside the first congruence kernel of $GL_2(\Z_p)$ and hence can be written in the form $C \times \PG$. 
		\end{enumerate}
		
	\end{examples}
	\section{Applications}\label{Sec:applicationsall}
	Suppose $G \cong \PG \times C$. Let $C_n=C^{p^n}$ and $G_n=\PG \times C_n$. 
	\begin{proposition}\label{prop:reg_growth}
		Let $\widehat{\Sel(E/F_\infty)}$ be a torsion $\Lambda(G)$-module. Then, for all large $n$, $\rank_{\Lambda(\PG)}\widehat{\Sel(E/F_\infty)}_{C_n}$  is a constant, independent of  $n.$
	\end{proposition}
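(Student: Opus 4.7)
The plan is to combine the Hochschild--Serre rank formula already used in the proof of Theorem \ref{mainTheoremtorsion} with a simple noetherian-stabilization argument. Setting $M = \widehat{\Sel(E/F_\infty)}$, I would first note that $G_n = \PG \times C_n$ is an open subgroup of $G$ of finite index $[C:C_n]$, so $\Lambda(G)$ is free of finite rank over $\Lambda(G_n)$, and hence $M$ remains a finitely generated torsion $\Lambda(G_n)$-module, i.e.\ $\rank_{\Lambda(G_n)} M = 0$. Applying to the split extension $1 \to C_n \to G_n \to \PG \to 1$ the same Hochschild--Serre argument that gave the rank identity labelled \eqref{2} in the proof of Theorem \ref{mainTheoremtorsion} (only the $p$-cohomological dimension $1$ of $C_n$ is used), I obtain
$$0 = \rank_{\Lambda(G_n)} M = \rank_{\Lambda(\PG)} M_{C_n} - \rank_{\Lambda(\PG)} H_1(C_n, M),$$
so that $\rank_{\Lambda(\PG)} M_{C_n} = \rank_{\Lambda(\PG)} H_1(C_n, M)$.

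For $n$ sufficiently large, $C_n$ is procyclic pro-$p$, isomorphic to $\Z_p$, with some topological generator $c_n$; then $H_1(C_n, M)$ is canonically identified with $M^{C_n} = \Ker(c_n - 1 : M \to M)$. The crucial observation is that $C_n$ lies in the centre of $G$, so that for $\gamma \in \Lambda(G)$ and $m \in M^{C_n}$ we have $c_n(\gamma m) = \gamma(c_n m) = \gamma m$. Thus $M^{C_n}$ is a $\Lambda(G)$-submodule of $M$, and the chain $M^{C_n} \subseteq M^{C_{n+1}} \subseteq \cdots$ is an ascending chain of $\Lambda(G)$-submodules of the finitely generated $\Lambda(G)$-module $M$. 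Noetherianity of $\Lambda(G)$ forces this chain to stabilize at some $N$, which immediately yields the desired constancy of $\rank_{\Lambda(\PG)} M_{C_n}$ for $n \geq N$.

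There is no really hard step; the argument is essentially a noetherian observation dressed up with the standard rank formula. The only mild care required will be to verify that $M_{C_n}$ and $H_1(C_n, M)$ are finitely generated as $\Lambda(\PG)$-modules, so that their $\Lambda(\PG)$-ranks are well-defined. The first is a quotient of $M$ modulo $I(C_n)\Lambda(G)$, hence finitely generated over $\Lambda(G)/I(C_n)\Lambda(G) \cong \Lambda(\PG)[C/C_n]$, which is finite over $\Lambda(\PG)$; the second is annihilated by $c_n - 1$ and so is a module over $\Lambda(G)/(c_n-1)\Lambda(G)$, itself free of rank $p^n$ over $\Lambda(\PG)$.
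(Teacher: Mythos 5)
Your proof is correct and follows essentially the same route as the paper: pass to the open subgroup $G_n=\PG\times C_n$, use the Hochschild--Serre rank formula for the extension $1\to C_n\to G_n\to \PG\to 1$ together with $\rank_{\Lambda(G)}M=0$ to reduce the question to the $\Lambda(\PG)$-rank of $M^{C_n}\cong H_1(C_n,M)$, observe that centrality of $C_n$ makes the $M^{C_n}$ an ascending chain of $\Lambda(G)$-submodules, and invoke noetherianity. Your additional remarks (taking $n$ large so that $C_n\cong\Z_p$ is procyclic, and the sanity check that $M_{C_n}$ and $H_1(C_n,M)$ are finitely generated over $\Lambda(\PG)$) are small clarifications the paper takes for granted; they do not change the argument.
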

	\begin{proof}
		Let $M = \widehat{\Sel(E/F_\infty)}$. As $G_n$ is of finite index in $G$, $M$ is also a finitely generated module over $\Lambda(G_n)$. As $C_n \cong \Z_p$, we can identify $M^{C_n}$ with  $ H_1(C_n,M)$ and then they both are finitely generated over $\Lambda(\PG)$. Therefore, 
		\begin{align*}
		\rank_{\Lambda(\PG)}M_{C_n} &= \rank_{\Lambda(G_n)}M + \rank_{\Lambda(\PG)}M^{C_n}\\
		&=p^n\rank_{\Lambda(G)}M + \rank_{\Lambda(\PG)}M^{C_n}\\
		&=\rank_{\Lambda(\PG)}M^{C_n}
		\end{align*}
		Note that $C_n$ is in the center, hence abelian and therefore $M^{C_n}$ is a $\Lambda(G)$-submodule of $M$. 
		But $M$ is a finitely generated module over $\Lambda(G)$ and hence $M$ is a noetherian module and satisfies the ascending chain  condition on its submodules. Hence the chain $$M^{C_0=C} \subset M^{C_1} \subset \cdots M^{C_n} \cdots $$ stabilizes and so $\rank_{\Lambda(\PG)}M^{C_n}$ is a constant independent of $n$, for all sufficiently large $n$. 
	\end{proof}
	
	Consider the following fundamental diagram.

	\begin{equation}\label{fundamental_diagram2}
	\begin{tikzcd}
	0 \arrow[r] & \Sel(E/K_\infty)^\PG   \arrow[r] & H^1(F_S/K_\infty, E_{p^\infty})^\PG  \arrow[r, "\lambda_{K_\infty}^\PG"] & \big(\oplus_{v \in S}J_v(K_\infty)\big)^\PG   \\
	0 \arrow[r] & \Sel(E/F) \arrow[u, "f"] \arrow[r] &  H^1(F_S/F, E_{p^\infty}) \arrow[u, "g"]  \arrow[r, "\lambda_F"] & \oplus_{v \in S}H^1(F_v,E)(p) \arrow[u, "h"] 
	\end{tikzcd}
	\end{equation}
		Let $\Coker(\lambda_{K_\infty}^\PG)$ be the cokernel of the map $\lambda_{K_\infty}^\PG$ in \eqref{fundamental_diagram2}.
	\begin{theorem}\label{thm:main2}
		The vertical maps $f,g,h$ in the fundamental diagram \eqref{fundamental_diagram2} have finite kernels and cokernels. Furthermore, 	if $\Sel(E/F)$ is finite, then $\Coker(\lambda_{K_\infty}^\PG)$ is finite.
	\end{theorem}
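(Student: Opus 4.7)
The plan is to analyze each vertical map via the Hochschild--Serre spectral sequence for $\PG = \Gal(K_\infty/F)$ applied to the global $p^\infty$-torsion of $E$ and, in the local case, to each decomposition group, and then to chase the diagram for the final assertion. The key global observation (already used in Theorem \ref{thm:main}) is that the center $C$ contains a scalar matrix of the form $1+p^n$, so $E(K_\infty)_{p^\infty} = E_{p^\infty}^C$ is annihilated by $p^n$ and hence coincides with the finite group $E_{p^n}$. The five-term exact sequence coming from \eqref{eq:hochserre},
\[
0 \to H^1(\PG, E(K_\infty)_{p^\infty}) \to H^1(F_S/F, E_{p^\infty}) \xrightarrow{g} H^1(F_S/K_\infty, E_{p^\infty})^{\PG} \to H^2(\PG, E(K_\infty)_{p^\infty}),
\]
then shows that $\ker(g)$ and $\Coker(g)$ are finite, because $\PG$ is a compact $p$-adic Lie group and its cohomology with any finite $p$-primary coefficients is finite.

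For $h = \bigoplus_{v \in S} h_v$, Shapiro's lemma identifies $J_v(K_\infty)^{\PG}$ with $H^1(K_{\infty,u}, E)(p)^{\Delta_v}$, where $u \mid v$ is a chosen prime of $K_\infty$ and $\Delta_v = \Gal(K_{\infty,u}/F_v)$ is the decomposition subgroup. A local inflation--restriction argument reduces $\ker(h_v)$ and $\Coker(h_v)$ to cohomology groups of $\Delta_v$ with coefficients in suitable finite $p$-primary modules: for $v \nmid p$ by a direct Kummer-theoretic computation, and for $v \mid p$ by invoking the Coates--Greenberg theory of deeply ramified extensions to replace $E$ by the module $D$ from \eqref{eq:D-seq}, as in the proof of Theorem \ref{thm:main}. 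The main obstacle is this local analysis at $v \mid p$; once it is in place, finiteness of $\ker(h_v)$ and $\Coker(h_v)$ follows from the finiteness of cohomology of a compact $p$-adic Lie group on finite coefficients, and summing over the finite set $S$ yields finiteness of $\ker(h)$ and $\Coker(h)$.

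For $f$, the natural inclusion $\ker(f) \hookrightarrow \ker(g)$ makes $\ker(f)$ finite. For $\Coker(f)$, compose the inclusion $\Sel(E/K_\infty)^{\PG} \hookrightarrow H^1(F_S/K_\infty, E_{p^\infty})^{\PG}$ with the projection to $\Coker(g)$: the image sits in the finite group $\Coker(g)$, and for $x$ in the kernel, a lift $x' \in H^1(F_S/F, E_{p^\infty})$ satisfies $\lambda_F(x') \in \ker(h)$ by commutativity of \eqref{fundamental_diagram2}, realizing $\Coker(f)$ as a subquotient of $\Coker(g) \oplus \ker(h)$. Finally, if $\Sel(E/F)$ is finite, the Cassels--Poitou--Tate exact sequence provides an injection $\Coker(\lambda_F) \hookrightarrow \widehat{\Sel(E/F)}$, so $\Coker(\lambda_F)$ is finite. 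Commutativity of the right square yields $h(\im(\lambda_F)) \subseteq \im(\lambda_{K_\infty}^{\PG})$, so $\Coker(\lambda_{K_\infty}^{\PG})$ is a quotient of $\big(\bigoplus_v J_v(K_\infty)\big)^{\PG} / h(\im(\lambda_F))$, which is an extension of $\Coker(h)$ by a quotient of $\Coker(\lambda_F)$, both finite.
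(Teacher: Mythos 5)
Your treatment of $g$, $f$, and the final claim about $\Coker(\lambda_{K_\infty}^{\PG})$ is sound and close to the paper's, though you take a slightly more elementary route for $g$: you observe directly that $E_{p^\infty}(K_\infty)=E_{p^\infty}^C$ is a finite group (annihilated by some $p^n$ because $C$ contains the scalar $1+p^n$) and invoke finiteness of the cohomology of a compact $p$-adic analytic group with finite coefficients, whereas the paper uses $H^1(C,E_{p^\infty})=0$ together with the Hochschild--Serre identification $H^i(\PG, E_{p^\infty}^C)\cong H^i(G,E_{p^\infty})$ and then cites finiteness of the latter. Both work. The chain of cokernels argument at the end is essentially the same as the paper's.

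The genuine gap is in your treatment of $h$. You claim that the kernel and cokernel of each $h_v$ reduce ``to cohomology groups of $\Delta_v$ with coefficients in suitable finite $p$-primary modules,'' and that finiteness then ``follows from the finiteness of cohomology of a compact $p$-adic Lie group on finite coefficients.'' That general principle does not apply here, because the relevant local coefficient modules are \emph{not} finite. After Shapiro and inflation--restriction, $\ker(h_v)$ is controlled by $H^1(\Delta_v, E(K_{\infty,u}))(p)$, and for $v\nmid p$ Kummer theory turns this into $H^1(\Delta_v, E_{p^\infty}(K_{\infty,u}))$ with $E_{p^\infty}(K_{\infty,u})=E_{p^\infty}^{\Theta_w^C}$. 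The local decomposition group $\Theta_w^C$ of $C$ at $w$ can perfectly well be trivial (the place can split completely in $F_\infty/K_\infty$), in which case $E_{p^\infty}(K_{\infty,u})=E_{p^\infty}\cong(\Q_p/\Z_p)^2$, which is infinite. Similarly at $v\mid p$ the Coates--Greenberg module $D$ is divisible and infinite. Finiteness in these cases is a nontrivial fact about the specific local Galois groups $\Delta_w\subseteq G$ acting on $E_{p^\infty}$ (resp.\ $D$); it is exactly the Coates--Howson computation of $H^i(\Delta_w, E(F_{\infty,w}))(p)$ over the $GL_2$-extension. The correct argument is to run Hochschild--Serre for $\Delta_w \to \Delta_w/\Theta_w^C=\Delta_v$, use the vanishing $H^1(\Theta_w^C, E_{p^\infty})=0$ (resp.\ $H^1(\Theta_w^C,D)=0$) already established in Theorem \ref{thm:main}, and then import the finiteness of $H^1(\Delta_w, E(F_{\infty,w}))(p)$ and $H^2(\Delta_w, E(F_{\infty,w}))(p)$ from Howson's thesis. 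Without citing those results your argument for $h$ does not close.

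Two minor quibbles: $E_{p^\infty}^C$ is contained in, but need not equal, $E_{p^n}$ (this does not affect your conclusion that it is finite); and the step $\Coker(\lambda_F)\hookrightarrow\widehat{\Sel(E/F)}$ from Cassels--Poitou--Tate should be stated more carefully (it is the compact Selmer group that appears in that sequence), though the finiteness you need is indeed standard given that $\Sel(E/F)$ is finite.
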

	\begin{proof}
Since  $H^1(C, E_{p^\infty})=0$, using Hochschild-Serre spectral sequence  and noting that the cohomology groups $$H^i(\PG, E_{p^\infty}(K_\infty))=H^i(G,E_{p^\infty})$$ are finite for $i\geq 1$, it is easy to see that the kernel and the cokernel of $g$ are finite.  Arguing along the lines of Theorem \ref{thm:main}, using  Shapiro's lemma  it follows that the kernel and cokernel of $h$ are also finite. By snake lemma we deduce that the same is true for $f$. 

If $\Sel(E/F)$ is finite, $\Coker(\lambda_F)$ is finite (cf. \cite[p. 35]{CoatesSujatha_book}). This implies that $\Coker(h \circ \lambda_F)$ is finite. But $\Coker(h \circ \lambda_F)=\Coker(\lambda_{K_\infty}^\PG \circ g)$ and we know that $\Coker(g)$ is finite. Hence $\Coker(\lambda_{K_\infty}^\PG)$ is finite.

	\end{proof}

	\bibliographystyle{alpha}
	\bibliography{main}
\end{document}